\theoremstyle{plain}
    \newtheorem{theorem}{Theorem}
    \newtheorem{statement}{Proposition}
  \theoremstyle{definition}
    \newtheorem{definition}{Definition}
        \newtheorem{remark}{Remark}
                   \newtheorem{example}{Example}
\newcommand{\rank}[1]{\mathrm{rank} \, #1}
\newcommand{\diff}[1]{\mathrm{d}  #1}
\newcommand{\diffFX}[2]{ \dfrac{\partial #1}{\partial #2} }
\newcommand{\diffFXi}[2]{ {\partial #1} / {\partial #2} }
\newcommand{\diffX}[1]{ \frac{\partial }{\partial #1} }
\newcommand{\diffXi}[1]{ {\partial } / {\partial #1} }
\newcommand{\R}{\mathbb{R}}
\newcommand{\Complex}{\mathbb{C}}
\newcommand{\T}{\mathrm{T}}
\newcommand{\PP}{\mathcal{P}}
\newcommand{\QQ}{\mathcal{Q}}
\newcommand{\Cont}{\mathrm{C}}
\newcommand{\CP}{\mathbb{C}\mathrm{P}}
\newcommand{\tr}[1]{\mathrm{tr} \, #1}
\newcommand{\eps}{\varepsilon}
\newcommand{\PoissonBracket}{ \{ \, , \} }
\newcommand{\g}{\mathfrak{g}}
\newcommand{\GL}{\mathrm{GL}}
\newcommand{\ad}{\mathrm{ad}\,}
\newcommand{\adi}[2]{\mathrm{ad}_{#1} \, #2}
\newcommand{\sgrad}[2]{\mathrm{sgrad}_{#1} \, #2}
\newcommand{\divergence}[1]{\mathrm{div} \, #1}
\title{Curvature of Poisson pencils in dimension three}
\author{Anton Izosimov\footnote{Moscow State University. e-mail: izosimov@mech.math.msu.su}}
\date{}
\begin{document}
\maketitle
\begin{abstract}
A Poisson pencil is called flat if all brackets of the pencil can be simultaneously locally brought to a constant form. Given a Poisson pencil on a $3$-manifold, we study under which conditions it is flat. Since the works of Gelfand and Zakharevich, it is known that a pencil is flat if and only if the associated Veronese web is trivial. We suggest a simpler obstruction to flatness, which we call the curvature form of a Poisson pencil. This form can be defined in two ways: either via the Blaschke curvature form of the associated web, or via the Ricci tensor of a connection compatible with the pencil.\par
We show that the curvature form of a Poisson pencil can be given by a simple explicit formula. This allows us to study flatness of linear pencils on three-dimensional Lie algebras, in particular those related to the argument translation method. Many of them appear to be non-flat.
\end{abstract}
\centerline{MSC: 37K10, 53D17, 53A60}
\section{Introduction}
Two Poisson brackets are called \textit{compatible}, if any linear combination of them is a Poisson bracket again. The notion of compatible Poisson brackets was introduced by F.Magri \cite{Magri} and I.Gelfand and I.Dorfman \cite{GD} because of its relation to integrability. Roughly speaking, if a dynamical systems is hamiltonian with respect to two compatible Poisson structures (i.e. it is \textit{bi-Hamiltonian}), then it automatically possesses many conservation laws. This mechanism is responsible for the integrability of many equations coming from physics and geometry (see \cite{Dorfman, Blaszak} and references therein).\par
A pair of two compatible Poisson brackets is called a \textit{Poisson pair}. Equivalently, one may speak about \textit{Poisson pencils}. A Poisson pencil is the set of all linear combinations of two compatible brackets.\par
Unlike individual Poisson brackets, which can be always locally brought to a constant form, Poisson pencils have non-trivial geometry. Differential geometry of Poisson pencils was studied, among others, by I.Gelfand and I.Zakharevich \cite{GZ, GZ2, Zakharevich, GZ3}, F.-J.Turiel \cite{Turiel, Turiel2, Turiel3, Turiel4}, and A.Panasyuk \cite{Panasyuk}.\par
Speaking about Poisson pencils, one needs to distinguish between the even and the odd-dimensional cases. The reason for this comes from linear algebra. A generic skew-symmetric form on an even-dimensional vector space is non-degenerate. So, for two forms $\PP$ and $\QQ$ one may define the operator $\PP\QQ^{-1}$. The eigenvalues of this operator are invariants of the pair $(\PP,\QQ)$. In odd dimension such invariants do not exist. Any two generic pairs of forms in this case are equivalent.\par
Consequently, in odd dimension any two generic Poisson pencils are equivalent at a point. However, this is no longer the case in the neighbourhood of a point. This observation makes odd-dimensional bi-Poisson geometry similar to Riemannian geometry. \par
A Poisson pencil is called \textit{flat} if all brackets of the pencil can be simultaneously locally brought to a constant form (like flat metrics). We are interested in the following problem. Given a generic Poisson pencil in odd dimension, how do we verify its flatness?\par
 This problem was intensively studied by I.Gelfand and I.Zakharevich \cite{GZ, GZ2, Zakharevich, GZ3}. For each generic Poisson pencil in odd dimension, Gelfand and Zakharevich construct a Veronese web\footnote{Or a $d+1$-web in $\R^d$, which is almost the same, see \cite{Dufour}.} naturally associated with the pencil and prove that the pencil is flat if and only if the associated web is trivial. This, in principle, allows to verify flatness for any given pencil. However,  it is difficult in practice. Passing from a pencil to the associated web involves the calculation of Casimir functions. To find Casimir functions of a Poisson bracket, one needs to solve partial differential equations. These equations are not always soluble by quadratures. This means that the explicit description of the Veronese web associated with a Poisson pencil is, in general, not possible.\par
So, it would be desirable to construct a curvature-like obstruction to flatness of a Poisson pencil. We do this in dimension three. The obstruction turns out to be a $2$-form. We call it \textit{the curvature form of a Poisson pencil}. This form can be defined in two ways: either via the Blaschke curvature form of the associated web, or via the Ricci tensor of a connection compatible with the pencil.\par
We show that the curvature form of a Poisson pencil can be given by a simple explicit formula. This allows us to study flatness of linear pencils on three-dimensional Lie algebras, in particular those related to the argument translation method. Many of them appear to be non-flat.

\section{Basics of bi-Hamiltonian geometry}
Throughout the paper, all objects belong to the class $\Cont^\infty$.
\begin{definition}
	Two Poisson brackets on a manifold $M$ are called \textit{compatible}, if any linear combination of them is a Poisson bracket again.  A pair of compatible brackets is called a \textit{Poisson pair}.
\end{definition}
For two Poisson brackets to be compatible, it is enough to require that their sum is also a Poisson bracket.
\begin{definition}
Let $(\PP,\QQ)$ be a Poisson pair. The set
$$
\{ \alpha \PP + \beta \QQ \mid \alpha, \beta \in \Complex \}
$$ is called the \textit{Poisson pencil} generated by $\PP,\QQ$.
\end{definition}
In other words, a Poisson pencil is a two-dimensional vector space of Poisson brackets. Choosing a basis in a Poisson pencil, one obtains a Poisson pair.
\begin{remark}
Further we use the following notation. Poisson brackets are denoted by $\PP, \QQ$ being treated as tensors, and $\PoissonBracket_{\mathbf p}, \PoissonBracket_{\mathbf q}$ being treated as operations on functions.
\end{remark}
\begin{definition}\label{def1}
 The rank of a Poisson pencil $\Pi = \{ \alpha \PP + \beta \QQ\}$ at a point $x$ is the number
 $$\rank \Pi(x) = \max_{\alpha, \beta} \rank (\alpha \PP(x) + \beta \QQ(x)).$$
 \end{definition}
  \begin{definition}\label{def2}
 The spectrum of a Poisson pencil $\Pi = \{ \alpha \PP + \beta \QQ\}$ at a point $x$ is the set
 $$
 \Lambda(x) = \{ (\alpha : \beta) \in \CP^1 \mid \rank (\alpha \PP(x) + \beta \QQ(x)) < \rank \Pi(x) \}.
 $$
\end{definition}
In dimension three, the spectrum is either empty or contains one element $\lambda$. In the latter case $\PP$ and $\QQ$ are proportional at $x$: $\PP(x) = -\lambda \QQ(x)$.
 \begin{definition}\label{def3}
A Poisson pencil $\Pi$ is said to be Kronecker at $x$ if its spectrum at $x$ is empty.
\end{definition}
In dimension three, this condition means that $\PP$ and $\QQ$ are not proportional at $x$.
\begin{definition}
 A Poisson pencil is called locally \textit{flat} if all brackets of the pencil can be simultaneously brought to a constant form in the neighbourhood of a generic point.
\end{definition}
Clearly, the rank and the spectrum of a flat pencil are locally constant. In dimension three, this is possible in three cases:
\begin{enumerate}
\item $\PP$ and $\QQ$ are identically zero;
\item $\PP = \lambda \QQ$, where $\lambda$ is constant;
\item the pencil is Kronecker of rank two.
\end{enumerate}
The first two cases are trivial. So, further we only consider Kronecker pencils of rank two.  Further we call such pencils \textit{generic}.

\section{Basics of web geometry}
\begin{wrapfigure}{r}{125pt}
\includegraphics[scale=0.25]{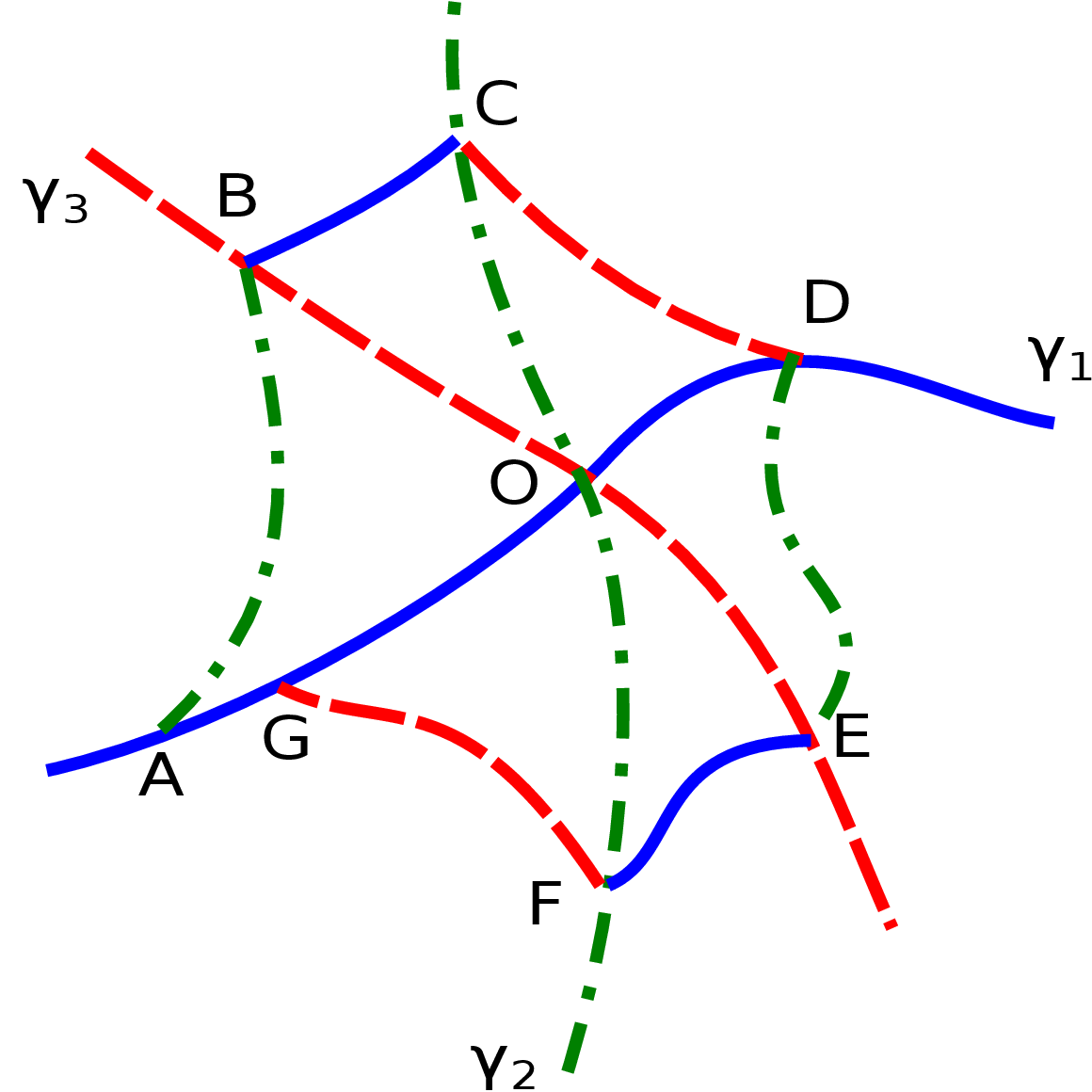}
\caption{$3$-Web}\label{web}
\end{wrapfigure}
\begin{definition}
	A $3$-web on a plane is three family of smooth curves such that
	\begin{enumerate}
		\item For any point $x$ there is a unique curve from each family passing through $x$.
		\item Curves from different families intersect transversally.
	\end{enumerate}
\end{definition}
\begin{definition} A $3$-web  is called trivial if it is diffeomorphic to a web given by three families of parallel lines. \end{definition}

Trivial $3$-webs are also called hexagonal $3$-webs due to the following construction. Take a point $O$. Let $\gamma_i$ be the curve from the $i$'th family passing through $O$. Take a point $A$ on $\gamma_1$ close to $O$ and consider the curve from the second family passing through $A$. This curve intersects $\gamma_3$ at a point $B$. Consider the curve from the first family passing through $B$. This curve intersects $\gamma_2$ at a point $C$. Continuing this procedure, we obtain a polygon $ABCDEFG$ depicted in Figure \ref{web}. In general, this polygon is not closed. If it is closed for any choice of $O$ and $A$, then the web is called hexagonal. Obviously, a trivial web is hexagonal. The inverse is also true.
\begin{theorem}[Blaschke \cite{Blaschke}]
A web is trivial if and only if it is hexagonal.
\end{theorem}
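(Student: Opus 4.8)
\emph{Proof plan.} The plan is to route both directions through the \emph{Blaschke curvature form} $K$ of the web and to close a short cycle of implications, two of which are formal. First I would set up $K$. Choose $1$-forms $\omega_1,\omega_2,\omega_3$ whose kernels are the tangent lines of the three families; since on a surface these three line fields are pairwise transverse, the forms can be rescaled so that $\omega_1+\omega_2+\omega_3=0$. In two dimensions there is then a \emph{unique} $1$-form $\gamma$ with $\diff\omega_1=\gamma\wedge\omega_1$ and $\diff\omega_2=\gamma\wedge\omega_2$, and the relation among the $\omega_i$ forces $\diff\omega_3=\gamma\wedge\omega_3$ as well. The only remaining freedom is a common rescaling $\omega_i\mapsto\lambda\omega_i$, under which $\gamma\mapsto\gamma+\diff\log\lambda$, so that $K:=\diff\gamma$ is a well-defined $2$-form intrinsically attached to the web. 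Working in adapted coordinates with $\F_1=\{x=\const\}$, $\F_2=\{y=\const\}$, $\F_3=\{u(x,y)=\const\}$, I would take $\omega_1=u_x\,\diff x$, $\omega_2=u_y\,\diff y$, $\omega_3=-\diff u$ and compute $\gamma=\partial_x(\log|u_y|)\,\diff x+\partial_y(\log|u_x|)\,\diff y$, whence $K=\partial_x\partial_y\log|u_x/u_y|\;\diff x\wedge\diff y$; this explicit formula is what makes the rest computable.

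The theorem then follows from the cycle $\text{trivial}\Rightarrow\text{hexagonal}\Rightarrow K\equiv0\Rightarrow\text{trivial}$. The first implication is elementary, as already hinted before the statement: for three families of parallel lines the polygon $ABCDEFG$ of Figure~\ref{web} closes for every $O$ and $A$ (it amounts to cancelling translation vectors), and hexagonality is a diffeomorphism invariant because it is phrased purely through leaves and their intersection points. The last implication is the Poincar\'e lemma: from $\diff\gamma=0$ one gets $\gamma=\diff\log\lambda$ locally for a positive function $\lambda$; replacing each $\omega_i$ by $\omega_i/\lambda$ makes all three closed, so locally $\omega_i=\diff f_i$ with $f_1+f_2+f_3=\const$, and since $\omega_1,\omega_2$ are independent, $(f_1,f_2)$ is a coordinate system in which $\F_1,\F_2$ are the coordinate grid and $\F_3=\{f_1+f_2=\const\}$ is a third parallel family, i.e.\ the web is trivial.

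The substance, and the step I expect to be the main obstacle, is the middle implication $\text{hexagonal}\Rightarrow K\equiv0$. Here I would expand the holonomy of the hexagon construction in the adapted coordinates: after a linear normalization fix $O=(0,0)$ with $u(0,0)=0$ and $u_x(0,0)=u_y(0,0)=1$, put $A=(0,t)\in\gamma_1$, solve successively by the implicit function theorem for the vertices $B,C,D,E,F$ and for the point $G=(0,y_G(t))$ where the final leaf meets $\gamma_1$, and set $\Delta(t):=y_G(t)-t$. Since the web depends on $u$ only up to a reparametrization $u\mapsto\phi(u)$ of the leaf labels, and every function can be made additive to second order by such a reparametrization (one kills the mixed quadratic term), the hexagon closes through second order for \emph{every} web, so $\Delta(t)=O(t^3)$; carrying the expansion one order further yields $\Delta(t)=c\,K(O)\,t^3+O(t^4)$, where $K(O)$ is the curvature density at $O$ and $c\neq0$ is a universal constant (the cubic coefficient, like $K$, is the only $3$-jet invariant of a $3$-web up to scale, and $c\neq0$ is checked on one explicit non-hexagonal web). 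Hence if every hexagon closes, in particular the small ones at every base point, then the curvature density vanishes identically, $K\equiv0$. The one genuinely laborious ingredient is this cubic-order bookkeeping of the nested implicit solutions; the cycle then closes, and as a byproduct infinitesimal hexagonality at every point turns out to be equivalent to full hexagonality.
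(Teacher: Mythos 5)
The paper itself gives no proof of this statement: it is quoted as a classical result of Blaschke, and in fact the two auxiliary facts you route through are also quoted there as Blaschke's theorems (the curvature criterion, Theorem~\ref{BThm}, and the expansion $f_3(G)-f_3(A)=\kappa\eps^3+\dots$ together with formula (\ref{BFormula})). So your proposal is best read as a reconstruction of the classical Blaschke--Chern argument, and its skeleton is the right one. The construction of the connection form $\gamma$ and of $K=\diff\gamma$ is correct (uniqueness of $\gamma$ from the first two structure equations, the third following from $\omega_1+\omega_2+\omega_3=0$), your coordinate formula $K=\partial_x\partial_y\log|u_x/u_y|\,\diff x\wedge\diff y$ agrees with (\ref{BFormula}) up to the normalization factor $2$ and a sign, and the two formal implications are complete: trivial $\Rightarrow$ hexagonal is immediate, and $K\equiv0\Rightarrow$ trivial via the Poincar\'e lemma, the rescaling $\omega_i\mapsto\omega_i/\lambda$, and the resulting closed forms $\diff f_i$ with $f_1+f_2+f_3=\const$ is the standard linearization and is sound.

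The gap, which you yourself flag, is the middle implication. The identity $\Delta(t)=c\,K(O)\,t^3+O(t^4)$ with a universal $c\neq0$ is precisely the nontrivial content of Blaschke's theorem, and your justification of it is an invariance argument that still needs its key lemma: that after reparametrizations $x\mapsto\psi(x)$, $y\mapsto\chi(y)$, $u\mapsto\phi(u)$ the $3$-jet of the web function can be brought to the normal form $u=x+y+\kappa\,xy(x-y)+O(4)$ with $\kappa$ a fixed multiple of $K(O)$, and that the cubic coefficient of the hexagon defect is linear in $\kappa$. (Your second-order closure argument is fine --- one kills the quadratic terms of $u$ by $\phi,\psi,\chi$ --- and the normal-form computation at third order is elementary: the only residual coefficient is $u_{xxy}-u_{xyy}$ at $O$, which is $K(O)$ up to a constant; but checking $c\neq0$ ``on one explicit web'' only suffices \emph{after} this reduction, since a priori the cubic coefficient could be a nonlinear or degenerate function of the $3$-jet.) Until that bookkeeping --- either the normal-form lemma plus one explicit expansion for the family $u=x+y+\kappa\,xy(x-y)$, or the direct cubic expansion of the nested implicit functions --- is carried out, the proposal is a correct and well-organized proof outline rather than a proof; with it, the cycle trivial $\Rightarrow$ hexagonal $\Rightarrow K\equiv0\Rightarrow$ trivial closes as you intend.
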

\clearpage
Now let us introduce the notion of curvature of a web. Suppose that the web is given by level sets of three functions $f_1, f_2, f_3$. Let $\eps = f_3(A) - f_3(O)$. Then it can be shown that $$f_3(G) - f_3(A) = \kappa\eps^3 + \dots,$$ so it makes sense to take $\kappa$ as a measure of ``curvature'' of a web. Of course, $\kappa$ depends on the choice of $f_3$. However, the differential form
$$
\theta = \kappa \, \diffFX{f_3}{f_1}\diffFX{f_3}{f_2}\,\diff f_1 \wedge \diff f_2
$$
depends only on the web itself. We will refer to $\theta$ as the \textit{Blaschke curvature form}.
\begin{theorem}[Blaschke \cite{Blaschke}]\label{BThm}
A web is trivial if and only if its curvature form is zero.
\end{theorem}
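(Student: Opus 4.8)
The plan is to reduce the theorem to the preceding one (a web is trivial if and only if it is hexagonal) by showing that the vanishing of the curvature form $\theta$ is equivalent to hexagonality. Since the three families are pairwise transversal, $f_3$ may serve together with $f_1$ (or with $f_2$) as a local coordinate, so the functions $\diffFXi{f_3}{f_1}$ and $\diffFXi{f_3}{f_2}$ are nowhere zero; hence $\theta \equiv 0$ if and only if the scalar $\kappa$ vanishes identically. It therefore suffices to prove that \emph{the web is trivial if and only if $\kappa \equiv 0$}. The ``only if'' direction is immediate: in coordinates presenting the web as three families of parallel lines one may take $f_1, f_2, f_3$ to be affine, the hexagonal polygon $ABCDEFG$ then closes exactly, $f_3(G) - f_3(A) = 0$, and so $\kappa = 0$ at every point; equivalently, $\theta$ is a web invariant and visibly vanishes on the standard web of parallel lines.

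For the converse I would pass to local coordinates $(x,y)$ in which the first two families are $\{x = \const\}$ and $\{y = \const\}$; the third family is then $\{h(x,y) = \const\}$ for a function $h$ with $h_x h_y \neq 0$. The crucial step is to compute $\kappa$ in these coordinates. Carrying the construction of the polygon through to third order in $\eps = f_3(A) - f_3(O)$ — that is, Taylor-expanding the successive intersection operations that produce $B, C, \dots, G$ from $A$ — one finds that the terms of order $\le 2$ cancel and the cubic coefficient equals
$$
\kappa \;=\; c(x,y)\,\frac{\partial^2}{\partial x \, \partial y}\log\left|\frac{h_x}{h_y}\right|,
$$
where $c(x,y)$ is a nowhere-vanishing factor (built out of $h_x$, $h_y$ and the chosen normalisations of the $f_i$). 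Consequently $\kappa \equiv 0$ is equivalent to $\log|h_x / h_y| = \phi(x) + \psi(y)$ near the point, i.e.\ to $h_x / h_y = u'(x)/v'(y)$ for functions $u, v$ with nowhere-vanishing derivatives, obtained by integrating $e^{\phi}$ and $e^{-\psi}$.

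Granting this formula, $\diff h = h_x\,\diff x + h_y\,\diff y$ is proportional to $\diff\bigl(u(x) + v(y)\bigr)$, so $h$ depends locally only on $u(x) + v(y)$; in the coordinates $(u,v)$ the three families become $\{u = \const\}$, $\{v = \const\}$ and $\{u + v = \const\}$, which are three families of parallel lines. Hence $\kappa \equiv 0$ forces the web to be trivial, which together with the ``only if'' direction completes the proof. As a byproduct this also reproves Blaschke's hexagonality criterion, since triviality was characterised there exactly by the closure of $ABCDEFG$, whose leading obstruction is $\kappa$.

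The single genuine obstacle is the displayed formula for $\kappa$: one must push the finite Taylor expansion of the chain of ``intersect one level curve with another'' operations to cubic order, verify the cancellation of the lower-order contributions, and extract the mixed logarithmic second derivative with a non-vanishing coefficient. Everything else is routine — transversality supplies the non-vanishing prefactors, and reconstructing $u$ and $v$ only requires solving ordinary differential equations, so the normal-form argument stays within the smooth category and uses no real-analyticity.
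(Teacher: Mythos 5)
The paper itself offers no proof of this statement --- all three Blaschke theorems are quoted from \cite{Blaschke} --- so there is nothing internal to compare you against; judged on its own terms, your proposal has the right architecture but a genuine gap at its centre. The parts you do carry out are correct: reducing $\theta\equiv 0$ to $\kappa\equiv 0$ via transversality, the ``only if'' direction (in the flat model the hexagon closes, or one computes $\theta$ directly for parallel lines and uses invariance), and, most importantly, the converse once the coordinate formula is granted: $\partial^2\log(h_x/h_y)/\partial x\,\partial y=0$ gives $\log|h_x/h_y|=\varphi(x)+\psi(y)$, hence $h_x/h_y=u'(x)/v'(y)$, hence $\diff h\wedge\diff(u(x)+v(y))=0$, and in the coordinates $(u,v)$ the three families become parallel lines. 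Note that this argument bypasses hexagonality entirely, so your framing of the proof as a reduction to the hexagonality theorem is unnecessary.

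The gap is the displayed formula $\kappa=c(x,y)\,\partial^2\log|h_x/h_y|/\partial x\,\partial y$ with $c$ nowhere zero, which you assert (``one finds that the terms of order $\le 2$ cancel\dots'') and then explicitly defer. That computation is not a technicality: it is the analytic heart of Blaschke's theorem, and both the cancellation of the quadratic terms in the hexagon defect and the identification of the cubic coefficient require a genuine third-order expansion of the six intersection maps, which you never perform. In the context of this paper the cleanest repair is to invoke formula~(\ref{BFormula}), stated as a separate theorem of Blaschke: with $f_1=x$, $f_2=y$, $f_3=h$ it yields exactly $\theta=2\,\partial_x\partial_y\log(h_y/h_x)\,\diff x\wedge\diff y$, after which your normal-form argument closes the proof. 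As written, however, the proposal replaces the one nontrivial step by an unverified claim, so it is incomplete precisely there.
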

The following theorem allows to compute Blaschke curvature.
\begin{theorem}[Blaschke \cite{Blaschke}]\label{BFormula}
The curvature form of a web given by level sets of $f_1, f_2, f_3$ is
\begin{align*}
	\theta	= 2 \, \diffX{f_1}\diffX{f_2} \log \frac{\diffFXi{f_3}{f_2}}{\diffFXi{f_3}{f_1}} \, \diff f_1 \wedge \diff f_2.
\end{align*}
\end{theorem}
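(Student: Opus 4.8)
The plan is to verify \eqref{BFormula} by reducing to a normal form adapted to an arbitrary chosen point and then comparing the two sides by an explicit jet computation. First note that it is enough to establish the identity for one convenient choice of defining functions: the left-hand side $\theta$ is already known, by the preceding two theorems, to be independent of the choice of $f_1,f_2,f_3$ and of the numeration of the families, and the right-hand side of \eqref{BFormula} is intrinsic for the same reason, which one checks in a couple of lines. Indeed, under $f_3\mapsto g(f_3)$ both $\partial f_3/\partial f_1$ and $\partial f_3/\partial f_2$ are multiplied by the same factor $g'(f_3)$, so their ratio --- hence the whole expression --- is unchanged; and under $f_1\mapsto\phi(f_1)$ the logarithm gains the extra summand $\log\phi'(f_1)$, which is annihilated by $\partial/\partial f_2$, while the factor $1/\phi'$ coming from $\partial/\partial f_1$ cancels the factor $\phi'$ coming from $df_1$ (and symmetrically in $f_2$). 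Thus both sides transform in exactly the same way under the group of reparametrizations preserving the web.

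Second, I would pass to the normal form. Near the chosen point take $f_1=x$ and $f_2=y$ as local coordinates; then the third foliation consists of the level sets of a function $f_3=h(x,y)$ with $h(0,0)=0$ and $h_x(0,0)\neq 0\neq h_y(0,0)$ by transversality. Reparametrizing the first two foliations by $f_1\mapsto h(f_1,0)$ and $f_2\mapsto h(0,f_2)$ --- admissible precisely because of that transversality --- we may assume $h(x,0)=x$ and $h(0,y)=y$. Then $h(x,y)-x-y$ vanishes on both coordinate axes, so $h(x,y)=x+y+xy\,r(x,y)$ for a smooth function $r$. A short Taylor expansion of $\log(h_y/h_x)$ at the origin shows that $\partial_x\partial_y\log(h_y/h_x)$ there equals an explicit constant times $(\partial_y r-\partial_x r)(0,0)$; since $h_x(0,0)=h_y(0,0)=1$, this evaluates the right-hand side of \eqref{BFormula} at the origin in terms of the $1$-jet of $r$.

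Third, I would compute the Blaschke hexagon defect for the normalized web. Let $\Phi$ be defined implicitly by $h(\Phi(t),t)=0$, so $\Phi(0)=0$ and $\Phi(t)=-t+r(0,0)\,t^{2}+\dots$. Starting the Blaschke polygon at $A=(0,\eps)$, the normalization $h|_{\{y=0\}}=x$ and $h|_{\{x=0\}}=y$ collapses every second leg of the polygon to the identity map, and tracing the construction through gives $f_3(A)=\eps$ and $f_3(G)=\Phi(\Phi(\eps))$, so the closing defect is $\Phi(\Phi(\eps))-\eps$. Expanding $\Phi$ to third order from its defining equation and composing, one finds $\Phi(\Phi(\eps))-\eps = c\,(\partial_y r-\partial_x r)(0,0)\,\eps^{3}+\dots$ for an explicit constant $c$; since $\partial f_3/\partial f_1=\partial f_3/\partial f_2=1$ at the origin, the definition of $\theta$ gives $\theta=c\,(\partial_y r-\partial_x r)(0,0)\,dx\wedge dy$ there. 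Comparing this with the value found in the second step --- i.e. matching $c$ against the constant coming from $\log(h_y/h_x)$, which also pins down the numerical coefficient in \eqref{BFormula} --- proves the formula at the chosen point; since that point was arbitrary and, by the first step, both sides are independent of the choice of defining functions, \eqref{BFormula} follows in general.

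The step I expect to be the main obstacle is the third-order composition $\Phi\circ\Phi$, together with checking that the cubic coefficient is exactly a multiple of $\partial_x r-\partial_y r$: this is a finite but delicate piece of bookkeeping, and the whole point of the normalization in the second step is that it collapses four of the six legs of the Blaschke polygon, leaving everything governed by the cubic jet of the single scalar function $\Phi$. As a cross-check --- and an alternative that bypasses the polygon entirely --- one may instead use the Chern connection of the web: choose $1$-forms $\omega_1=-h_x\,dx$, $\omega_2=-h_y\,dy$, $\omega_3=dh$ cutting out the three foliations, so that $\omega_1+\omega_2+\omega_3=0$; solve $d\omega_i=\gamma\wedge\omega_i$ to obtain the connection form $\gamma=(h_{xy}/h_y)\,dx+(h_{xy}/h_x)\,dy$; and compute $d\gamma=-\partial_x\partial_y\log(h_y/h_x)\,dx\wedge dy$. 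Identifying $\theta$ with $d\gamma$ --- which simultaneously reproves Theorem \ref{BThm} --- recovers \eqref{BFormula} up to the same overall constant.
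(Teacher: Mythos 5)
The paper gives no proof of this statement --- it is quoted from Blaschke --- so your argument can only be judged on its own terms, and on those terms the outline is essentially correct and workable. The invariance check in your first step is right (reparametrization of $f_3$ cancels in the ratio, reparametrizations of $f_1,f_2$ cancel between the mixed derivative and $\diff f_1\wedge\diff f_2$), the normalization $h(x,0)=x$, $h(0,y)=y$, $h=x+y+xy\,r(x,y)$ is legitimate, and the computations you defer do close: writing $r_0=r(0,0)$, the defining relation $h(\Phi(t),t)=0$ gives $\Phi(t)=-t+r_0t^2+\bigl((r_y-r_x)(0,0)-r_0^2\bigr)t^3+\dots$, hence $\Phi(\Phi(\eps))-\eps=-2\,(r_y-r_x)(0,0)\,\eps^3+\dots$, while on the other side $\partial_x\partial_y\log(h_y/h_x)(0,0)=2\,(r_y-r_x)(0,0)$; your Chern-connection cross-check $\gamma=(h_{xy}/h_y)\,dx+(h_{xy}/h_x)\,dy$, $d\gamma=-\partial_x\partial_y\log(h_y/h_x)\,dx\wedge dy$ is also correct.

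The one place where you should expect trouble is exactly the constant you propose to ``pin down''. With the hexagon traversed as described in the paper (start at $A\in\gamma_1$, legs along families $2,1,3,2,1,3$) and the defect measured as $f_3(G)-f_3(A)$, the numbers above give $\kappa\,h_xh_y=-2\,(r_y-r_x)(0,0)$ at the base point, i.e.\ the defect-defined form equals $\partial_{f_1}\partial_{f_2}\log\frac{\partial f_3/\partial f_1}{\partial f_3/\partial f_2}\,\diff f_1\wedge\diff f_2=d\gamma$, which is $-\tfrac12$ times the right-hand side of \eqref{BFormula}. So the matching step in your third paragraph will not literally reproduce the coefficient $2$; it will instead expose that the paper's loosely stated normalization of $\kappa$ and formula \eqref{BFormula} follow different classical conventions. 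This is not a defect of your method --- the identity holds up to a universal nonzero constant, and only the vanishing of $\theta$ is used afterwards --- but to make the proof complete you must either fix the convention for the hexagon defect that produces the stated factor (e.g.\ a different sign/normalization of $\kappa$) or state the result up to that universal constant, rather than assert that the coefficient $2$ falls out of the construction exactly as described.
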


\section{Gelfand-Zakharevich reduction}\label{GZRed}
Consider a generic pencil $\Pi = \{ \alpha \PP + \beta \QQ\}$ on a $3$-manifold $M$. Fix a small ball $U \subset M$. Let $f,g,h$ be local Casimir functions of $\PP, \QQ, \PP+\QQ$ respectively such that $\diff f, \diff g, \diff h$ do not vanish.
\begin{statement}\label{p1}
$\diff f, \diff g, \diff h$ are pairwise linearly independent at every point.
\end{statement}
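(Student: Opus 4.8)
The plan is to reduce the statement to linear algebra at a single point and to exploit the peculiarities of skew forms of corank one on a three-dimensional space.

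Fix $x\in U$. Since $\Pi$ is Kronecker of rank two at $x$, every bracket $\alpha P(x)+\beta Q(x)$ with $(\alpha:\beta)\in\CP^1$ has rank exactly two, hence, regarded as a map $\T_x^*M\to\T_xM$, it has a one-dimensional kernel. Because $f$ is a Casimir of $P$ we have $P(\diff f,\,\cdot\,)=0$, i.e.\ $\diff f(x)\in\Ker P(x)$; as $\diff f$ does not vanish, $\diff f(x)$ spans $\Ker P(x)$, and likewise $\diff g(x)$ spans $\Ker Q(x)$ and $\diff h(x)$ spans $\Ker(P+Q)(x)$. Thus the proposition is equivalent to the assertion that the three lines $\Ker P(x)$, $\Ker Q(x)$, $\Ker(P+Q)(x)\subset\T_x^*M$ are pairwise distinct.

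I would in fact prove the stronger statement that the map
\[
\phi\colon\CP^1\longrightarrow\mathbb{P}(\T_x^*M),\qquad (\alpha:\beta)\longmapsto\bigl[\Ker(\alpha P(x)+\beta Q(x))\bigr],
\]
which is well defined because every bracket of the pencil has rank two, is injective; the proposition then follows since $P$, $Q$ and $P+Q$ correspond to the three distinct arguments $(1:0)$, $(0:1)$, $(1:1)$. Suppose $\phi(\alpha_1:\beta_1)=\phi(\alpha_2:\beta_2)=[\xi]$ with $(\alpha_1:\beta_1)\neq(\alpha_2:\beta_2)$. Then the brackets $\alpha_1P(x)+\beta_1Q(x)$ and $\alpha_2P(x)+\beta_2Q(x)$ are linearly independent and hence span the pencil, so $\xi$, lying in the kernel of both, lies in the kernel of every bracket; in particular $\xi\in\Ker P(x)\cap\Ker Q(x)$. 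Being lines that both contain $\xi\neq0$, the kernels $\Ker P(x)$ and $\Ker Q(x)$ coincide; passing to annihilators gives $\Imm P(x)=\Imm Q(x)=:W$, a $2$-plane in $\T_xM$. Then $P(x)$ and $Q(x)$ are nonzero elements of $\Lambda^2 W$, which is one-dimensional, so they are proportional --- contradicting the Kronecker condition, which in dimension three says precisely that $P$ and $Q$ are not proportional at $x$.

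I do not anticipate a real obstacle here. The only two points that require care are the linear algebra of skew forms of corank one on a $3$-space --- namely that kernel and image are each other's annihilators, and that such a form is determined up to scale by its kernel line, since $\dim\Lambda^2 W=1$ for a $2$-plane $W$ --- together with the observation that the common kernel of two independent brackets of the pencil is already annihilated by $P$ and by $Q$ separately. For completeness one may note that the image of $\phi$ is a projective line in $\mathbb{P}(\T_x^*M)$, which is the pointwise trace of the Veronese web of Section~\ref{GZRed}, though this refinement is not needed for the statement.
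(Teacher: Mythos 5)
Your proof is correct and follows essentially the same route as the paper: linear dependence of two of the differentials means the corresponding brackets have coinciding kernels, and two skew-symmetric forms of rank two on a three-dimensional space with the same kernel are proportional, contradicting the Kronecker condition. Your extra packaging (injectivity of the kernel map $\CP^1\to\mathbb{P}(\T_x^*M)$ and the $\Lambda^2 W$ argument) just spells out in more detail what the paper's terse proof asserts.
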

\begin{proof}
	Indeed, if two of them are dependant at some point, then the corresponding brackets have coinciding kernels. Two skew-symmetric forms in $\R^3$ with coinciding kernels are proportional. So, the pencil is not Kronecker.
\end{proof}
\begin{statement}\label{p2}
$\diff f, \diff g, \diff h$ are linearly dependent at every point.
\end{statement}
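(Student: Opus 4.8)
The plan is to reduce the statement to pointwise linear algebra on the three-dimensional spaces $\T_xM$, $\T^*_xM$. The device that makes everything work is the following. Fix an auxiliary local volume form $\mu$ on $U$ and associate to a bivector $R$ the $1$-form $\omega_R := \iota_R \mu$; this assignment $R \mapsto \omega_R$ is \emph{linear} in $R$, so in particular $\omega_{P+Q} = \omega_P + \omega_Q$. The key point is that whenever $\rank R(x) = 2$, the nonzero covector $\omega_R(x)$ spans $\Ker R(x)$. One way to see this is in coordinates: if $R$ corresponds to $(p_1,p_2,p_3)$ under the standard identification $\Lambda^2\R^3\cong\R^3$, then $\omega_R = p_1\,\diff x^1 + p_2\,\diff x^2 + p_3\,\diff x^3$ and a one-line computation gives $R(\omega_R,\cdot)=0$; a coordinate-free way is to write the rank-$2$ bivector as $R = u\wedge v$ and observe that $\iota_{u\wedge v}\mu$ annihilates $\mathrm{span}(u,v) = \Imm R$, hence lies in $\mathrm{Ann}(\Imm R) = \Ker R$, which is one-dimensional.

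Granting this, I would argue as follows. Because the pencil is Kronecker of rank two, at every point of $U$ the brackets $P$, $Q$ and $P+Q$ all have rank exactly $2$ (rank $3$ is impossible for a skew form in dimension three, and rank $<2$ is excluded by emptiness of the spectrum), so $\omega_P$, $\omega_Q$ and $\omega_{P+Q}$ are all nowhere vanishing. A function $f$ being a Casimir of $P$ means $\diff f \in \Ker P$; since $\diff f$ is assumed nonvanishing and $\Ker P = \langle\omega_P\rangle$, we get $\diff f = a\,\omega_P$ for some nowhere-zero function $a$ on $U$, and likewise $\diff g = b\,\omega_Q$, $\diff h = c\,\omega_{P+Q}$ with $b,c$ nowhere zero.

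Finally I would compute
$$
\diff f \wedge \diff g \wedge \diff h = abc\,\omega_P \wedge \omega_Q \wedge \omega_{P+Q} = abc\,\omega_P \wedge \omega_Q \wedge (\omega_P + \omega_Q) = 0,
$$
each summand on the right containing a repeated factor. Thus $\diff f \wedge \diff g \wedge \diff h$ vanishes at every point of $U$, which is exactly the asserted linear dependence of $\diff f$, $\diff g$, $\diff h$.

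I do not anticipate a real obstacle here: the only slightly delicate points are the identification $\Ker R = \langle \iota_R\mu\rangle$ for rank-$2$ bivectors (handled by either computation above) and the bookkeeping ensuring that the Kronecker rank-two hypothesis keeps $P$, $Q$, $P+Q$ all of rank two, so that none of the $1$-forms $\omega_P$, $\omega_Q$, $\omega_{P+Q}$ — and hence none of the Casimir differentials — degenerates anywhere on $U$. Note that the Jacobi identity itself is not used beyond having been invoked already to produce the Casimir functions $f$, $g$, $h$ we are handed; the statement is really a consequence of the linearity of $R\mapsto\iota_R\mu$ together with the fact that, in dimension three, a rank-$2$ bivector determines the line of its Casimir differential.
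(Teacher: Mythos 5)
Your argument is correct, but it is not the argument the paper gives. The paper's proof never touches the bivectors as such: it observes that $\{f,g\}_{\mathbf p}=\{f,g\}_{\mathbf q}=0$ (each vanishing because one of the two functions is a Casimir of the respective bracket), hence $\{f,g\}_{\mathbf p+\mathbf q}=0$, so $f,g,h$ pairwise commute with respect to $P+Q$; since $P+Q\neq 0$, its rank is two, a maximal isotropic subspace of $\T^*_xM$ for it is two-dimensional, and therefore $\diff f,\diff g,\diff h$ cannot be independent. You instead use the dimension-three duality $R\mapsto\iota_R\mu$: linearity of this map gives $\omega_{P+Q}=\omega_P+\omega_Q$, and since for a rank-two bivector $\omega_R$ spans $\Ker R$, the kernel line of $P+Q$ lies in the span of the kernel lines of $P$ and $Q$, so $\diff f\wedge\diff g\wedge\diff h=0$. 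Both routes are sound; yours needs the full Kronecker hypothesis (all three brackets of rank two, which it does supply), whereas the paper's only needs $P+Q\neq 0$ at the point, and the paper's isotropy argument is the one that generalizes to the higher-dimensional Gelfand--Zakharevich setting. On the other hand, your identification $\diff f=a\,\omega_P$, $\diff g=b\,\omega_Q$ is slightly stronger (it locates $\diff h$ explicitly in the span of $\diff f,\diff g$) and is in fact exactly the ``integrating factor'' representation $\diff f=\mu(\{y,z\}_{\mathbf p},\{z,x\}_{\mathbf p},\{x,y\}_{\mathbf p})$ that the paper itself deploys later, in the proof of Theorem \ref{curvThm}; so your proof anticipates a device the paper reserves for the curvature computation rather than for this proposition.
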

\begin{proof}
Since $f$ is a Casimir function of $\PP$, we have $
\{f,g\}_{\mathbf p} = 0.
$
Since $g$ is a Casimir function of $\QQ$, we have
$
\{f,g\}_{\mathbf q} = 0.
$
Therefore, $\{f,g\}_{\mathbf p+\mathbf q} = \{f,g\}_{\mathbf p} + \{f,g\}_{\mathbf q} = 0$. Since $h$ is a Casimir function of $\PP + \QQ$, we have $\{f,h\}_{\mathbf p+\mathbf q} = \{g,h\}_{\mathbf p+\mathbf q} = 0$. Consequently, $f,g,h$ commute with respect to $\PP+\QQ$. But since $\PP+\QQ \neq 0$, this is only possible if their differentials are linearly dependent.
\end{proof}

Propositions \ref{p1}, \ref{p2} imply that $h$ is a function of $f,g$. Now consider the map
\begin{align}\label{redMap}
\pi \colon U \to \R^2
\end{align}
given by $f,g$. Symplectic leaves of $\PP$ project under this map to level sets of $f$. Symplectic leaves of $\QQ$ project to level sets of $g$. Finally, symplectic leaves of $\PP+\QQ$ project to level sets of $h(f,g)$. Proposition \ref{p1} implies that these three families of curves form a $3$-web $W(\PP,\QQ)$.
\begin{definition}
The $3$-web $W(P,Q)$ is called the \textit{bi-Hamiltonian reduction} of the Poisson pair $(P,Q)$.
\end{definition}
\begin{theorem}[Gelfand-Zakharevich]\label{GZThm}
	A generic pencil $\{ \alpha \PP + \beta \QQ\}$ on a three-dimensional manifold is locally flat if and only if the web $W(\PP,\QQ)$ is trivial.
\end{theorem}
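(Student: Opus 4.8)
The plan is to prove both implications directly, using that a Kronecker rank-two pencil in dimension three can be put, in suitable coordinates, into an almost-constant form; I write $h_f,h_g$ for $\diffFXi hf,\diffFXi hg$. The implication ``flat $\Rightarrow$ $W(P,Q)$ trivial'' is the easy one: if all brackets are simultaneously constant in coordinates $(x_1,x_2,x_3)$, then a nonzero constant bivector on $\R^3$ has rank two and a constant one-dimensional kernel, hence a linear Casimir; and since, under the linear identification $\Lambda^2\R^3\cong(\R^3)^*$ given by a volume form, the assignment ``bivector $\mapsto$ kernel covector'' is linear, a linear Casimir of $P+Q$ is a linear combination $h=af+bg$ of linear Casimirs $f,g$ of $P,Q$, with $a,b\neq 0$ by Proposition \ref{p1}. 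Then the three foliations of $W(P,Q)$ are the parallel families $\{f=\const\}$, $\{g=\const\}$, $\{af+bg=\const\}$, so $W(P,Q)$ is trivial.

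For the converse I would first put the pair into an adapted form. Work on a ball $U$ about a generic point as in Section \ref{GZRed}, with Casimirs $f$ of $P$, $g$ of $Q$ and $h=h(f,g)$ of $P+Q$. Since the pencil is Kronecker of rank two, $P$, $Q$ and $P+Q$ each have rank exactly two at every point of $U$. Choose any $w$ making $(f,g,w)$ coordinates. As $\Ker P=\langle\diff f\rangle$, the image of $P$ is the plane $\langle\diffX g,\diffX w\rangle$, so $P=\phi\,\diffX g\wedge\diffX w$ for a nowhere-vanishing $\phi=\phi(f,g,w)$, and similarly $Q=\psi\,\diffX f\wedge\diffX w$. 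Computing $\Ker(P+Q)$ and demanding that $h$ be its Casimir yields $\psi h_f=-\phi h_g$, i.e.\ $Q=-\phi\,(h_g/h_f)\,\diffX f\wedge\diffX w$. (In this gauge any bivector $\phi(f,g,w)\,\diffX g\wedge\diffX w$ is automatically Poisson --- its Jacobiator vanishes on the coordinate triple --- as is the sum, so no hidden integrability condition is imposed.)

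Now triviality enters. By the Blaschke formula \eqref{BFormula} applied with $f_1=f$, $f_2=g$, $f_3=h$, the web is trivial iff $\diffX f\diffX g\log(h_g/h_f)=0$, i.e.\ iff $h_g/h_f=A(f)B(g)$ for some functions $A,B$. A reparametrisation $f\mapsto\wave f(f)$, $g\mapsto\wave g(g)$ --- which only changes the choice of Casimirs of $P$ and $Q$ and is found by a quadrature --- then achieves $\diffFXi h{\wave f}=\diffFXi h{\wave g}$, so $h$ depends only on $\wave f+\wave g$; since a function of a Casimir is a Casimir, we may relabel and assume outright that $h=f+g$, whence $Q=-\phi\,\diffX f\wedge\diffX w$. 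Finally put $\wave w=\int^{w}\phi(f,g,w')^{-1}\,\diff{w'}$, another quadrature; then $(f,g,\wave w)$ are coordinates and in them $P=\diffX g\wedge\diffX{\wave w}$ and $Q=-\diffX f\wedge\diffX{\wave w}$ have constant coefficients, so every $\alpha P+\beta Q$ is constant and the pencil is flat.

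The main work is the second step --- reaching the adapted form cleanly --- together with the realisation that Blaschke's criterion \eqref{BFormula} is \emph{exactly} the solvability condition for the last normalisation: the two quadratures are routine, but recognising that web-triviality is precisely what makes $h_g/h_f$ factor as $A(f)B(g)$, so that the Casimirs can be rescaled to give $h=f+g$, is the heart of the matter. One should also keep an eye on genericity --- that on a suitable ball $\pi$ is a submersion carrying the expected web and that $h_f\neq 0$ there --- which is guaranteed near a generic point by Section \ref{GZRed}.
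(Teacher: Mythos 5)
Your argument is correct, but note that the paper itself does not prove Theorem \ref{GZThm}: it is quoted from Gelfand and Zakharevich, whose proof proceeds via Veronese webs in arbitrary odd dimension. What you give is a self-contained, dimension-three proof, and it holds up. The easy direction (flat $\Rightarrow$ trivial) via linearity of the kernel-covector assignment under a volume form is fine. For the converse, the normal form $P=\phi\,\diffX g\wedge\diffX w$, $Q=-\phi\,(h_g/h_f)\,\diffX f\wedge\diffX w$ (with $h_f,h_g\neq 0$ forced by Proposition \ref{p1}) is legitimate, and your key observation --- that by Theorem \ref{BThm} and formula \eqref{BFormula} triviality of the web is exactly $\diffX f\diffX g\log(h_g/h_f)=0$, i.e.\ $h_g/h_f=A(f)B(g)$, which is precisely the solvability condition for rescaling the Casimirs so that $h=f+g$ --- is right; the final quadrature $\wave w=\int\phi^{-1}\,\diff w$ then renders both tensors constant, hence the whole pencil. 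Compared with the paper's route (a citation of the general Gelfand--Zakharevich result), your proof buys an elementary argument whose only external input is Blaschke's criterion, which the paper states anyway; the price is that it is specific to dimension three. One small caveat: your parenthetical claim that ``the sum'' is automatically Poisson in this gauge is only true because $\psi/\phi=-h_g/h_f$ is independent of $w$; for arbitrary $\phi,\psi$ the bivector $\phi\,\diffX g\wedge\diffX w+\psi\,\diffX f\wedge\diffX w$ need not satisfy Jacobi. This does not damage the argument --- compatibility is a hypothesis, and you use it only through the existence of the common-form Casimir $h(f,g)$ supplied by Propositions \ref{p1} and \ref{p2} --- but the remark as written overstates what the gauge alone gives you.
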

\begin{remark}
 This theorem is true in any dimension, provided that generic means Kronecker of corank one. In the analytic case it was proved by Gelfand and Zakharevich \cite{GZ,GZ2}. The proof in the $\Cont^{\infty}$ case is due to Turiel \cite{Turiel4}. Note that in the three-dimensional situation this result can be easily proved by elementary means.
\end{remark}

\section{Curvature form}
\begin{definition} Let $(\PP,\QQ)$ be a generic Poisson pair on a $3$-manifold. The curvature of $(\PP,\QQ)$ is
$$
\Theta(\PP,\QQ) = \pi^*(\theta),
$$
where $\pi$ is the reduction map (\ref{redMap}) and ${\theta}$ is the Blaschke curvature of the web $W(\PP,\QQ)$.
\end{definition}
\begin{theorem}\label{curvThm} Let $(\PP,\QQ)$ be a generic Poisson pair on a $3$-manifold. Then $\Theta(\PP,\QQ)$, being written in local coordinates $x,y,z$, reads
\begin{align}\label{curvFormula}
\Theta(\PP,\QQ) = 2\,\sum_\circlearrowleft\left( \,\left\{z,\frac{\divergence (\sgrad{\mathbf q}{z})}{\Delta_z}\right\}_{\!\!\mathbf p} - \left\{z,\frac{\divergence (\sgrad{\mathbf p}{z})}{\Delta_z}\right\}_{\!\!\mathbf q}\,\right) \diff x \wedge \diff y \, 
\end{align}
where
\begin{enumerate}
\item $\circlearrowleft$ denotes the sum over cyclic permutations of $x,y,z$;
\item $\sgrad{\mathbf p}{z}$ and $\sgrad{\mathbf q}{z}$ are the hamiltonian vector fields $\PP\diff z, \QQ\diff z$ respectively;
\item $\divergence V$ is the standard divergence $\sum \diffFXi{V^i}{x^i}$;
\item $\Delta_z$ is
\begin{align*}\Delta_z = \left|\,\vphantom{\int\limits_{a}{b}}\begin{matrix}
\left\{\vphantom{\frac{a}{b}}z,x\right\}_{\!\mathbf p} & \left\{\vphantom{\frac{a}{b}}z,x\right\}_{\!{\mathbf q}} \\
\left\{\vphantom{\frac{a}{b}}z,y\right\}_{\!{\mathbf p}} & \left\{\vphantom{\frac{a}{b}}z,y\right\}_{\!{\mathbf q}}
\end{matrix}\,\right|; \end{align*}
\item if $\Delta_x, \Delta_y$ or $\Delta_z$ is zero, then the respective term is omitted from the sum\footnote{Note that $\Delta_x, \Delta_y$ and $\Delta_z$ cannot be zero simultaneously, since $\PP$ and $\QQ$ are not proportional to each other.}.
\end{enumerate}
\end{theorem}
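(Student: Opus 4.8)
The plan is to compute the $2$-form $\Theta(P,Q)=\pi^*\theta$ one coefficient at a time by restricting it to the coordinate surfaces $S_c=\{z=c\}$, and there to recognize the pullback of the Gelfand--Zakharevich web as the web directed by the Hamiltonian vector fields $\sgrad{\mathbf p}{z}$, $\sgrad{\mathbf q}{z}$, $\sgrad{\mathbf p}{z}+\sgrad{\mathbf q}{z}$. A $2$-form on a $3$-manifold is determined by its three components along $\diff x\wedge\diff y$, $\diff y\wedge\diff z$ and $\diff z\wedge\diff x$, and the first of these is read off from the restriction of $\Theta(P,Q)$ to $S_c$ (where $\diff z=0$). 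As the whole construction is cyclically symmetric in $x,y,z$, it is enough to identify this one component; the other two follow by permutation and the three of them sum to the right-hand side of (\ref{curvFormula}). Now if $j\colon S_c\hookrightarrow U$ is the inclusion, then $j^*\Theta(P,Q)=(\pi\circ j)^*\theta=(\pi|_{S_c})^*\theta$, and --- provided $\diff f,\diff g,\diff z$ are independent, which is exactly the condition $\Delta_z\neq0$ --- the map $\pi|_{S_c}$ is a local diffeomorphism. By naturality of the Blaschke curvature (clear from (\ref{BFormula})), $(\pi|_{S_c})^*\theta$ is therefore the Blaschke curvature of the web on $S_c$ whose three families are the traces of the symplectic leaves of $P$, $Q$ and $P+Q$ (Section \ref{GZRed}). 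The trace on $S_c$ of a leaf of $P$ is an integral curve of $\sgrad{\mathbf p}{z}$: this field is tangent to $S_c$ because $\{z,z\}_{\mathbf p}=0$, and tangent to the leaves of $P$ because $f$ is a Casimir, so that $\{z,f\}_{\mathbf p}=0$; likewise for $Q$, and the third family is directed by $\sgrad{\mathbf p}{z}+\sgrad{\mathbf q}{z}$.

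It remains to compute the Blaschke curvature of this planar web. Put $\Omega=(\diff x\wedge\diff y)|_{S_c}$ and $\omega_1=\iota_{\sgrad{\mathbf p}{z}}\Omega$, $\omega_2=\iota_{\sgrad{\mathbf q}{z}}\Omega$, $\omega_3=-\omega_1-\omega_2$; these $1$-forms define the three foliations and satisfy $\omega_1+\omega_2+\omega_3=0$. A direct check gives $\omega_1\wedge\omega_2=\Delta_z\,\Omega$, while Cartan's formula and $\diff\Omega=0$ give $\diff\omega_1=(\divergence\sgrad{\mathbf p}{z})\,\Omega$ and $\diff\omega_2=(\divergence\sgrad{\mathbf q}{z})\,\Omega$ --- the $3$-dimensional divergence restricts to the surface divergence because $z$ is constant on $S_c$ and the $z$-components of $\sgrad{\mathbf p}{z}$ and $\sgrad{\mathbf q}{z}$ vanish. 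A short computation from (\ref{BFormula}), using local defining functions of the web, shows that whenever a web is presented by $1$-forms with $\omega_1+\omega_2+\omega_3=0$ its curvature equals $-2\,\mathrm{d}\gamma$, where $\gamma$ is the unique $1$-form with $\mathrm{d}\omega_i=\gamma\wedge\omega_i$ (and $\mathrm{d}\gamma$ is unaffected if all the $\omega_i$ are multiplied by a common factor). Solving this linear system gives
\[
\gamma=\frac{\divergence\sgrad{\mathbf q}{z}}{\Delta_z}\,\omega_1-\frac{\divergence\sgrad{\mathbf p}{z}}{\Delta_z}\,\omega_2 ,
\]
and then, the terms proportional to $\Omega$ cancelling,
\[
\mathrm{d}\gamma=\mathrm{d}\!\left(\frac{\divergence\sgrad{\mathbf q}{z}}{\Delta_z}\right)\wedge\omega_1-\mathrm{d}\!\left(\frac{\divergence\sgrad{\mathbf p}{z}}{\Delta_z}\right)\wedge\omega_2 .
\]
That cancellation is precisely why the final formula is so short. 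Finally the identity $\mathrm{d}\phi\wedge\iota_X\Omega=X(\phi)\,\Omega$, applied with $X=\sgrad{\mathbf p}{z}$ and with $X=\sgrad{\mathbf q}{z}$, turns the two wedge products into Poisson brackets, and one arrives at exactly the $\diff x\wedge\diff y$-term of (\ref{curvFormula}).

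Two loose ends remain. Where $\Delta_z$ vanishes the surface $S_c$ is no longer transverse to the fibres of $\pi$ and the computation above is vacuous; but since $\diff x,\diff y,\diff z$ span the cotangent space, the determinants $\Delta_x,\Delta_y,\Delta_z$ never vanish simultaneously, so every point is covered by at least one of the three computations, and on an overlap they agree since each gives $\Theta(P,Q)$ --- this is why the formula holds with the stated convention that a term is dropped when its $\Delta$ is zero. The part I expect to demand the most care is the bookkeeping of the overall sign and of the constant $2$: the conventions for $\sgrad{}{}$ relative to the Poisson bracket, the sign in the interior product, and the orientation built into (\ref{BFormula}) all have to be reconciled, and the (routine but fussy) naturality of the Blaschke curvature under $\pi|_{S_c}$ has to be checked honestly.
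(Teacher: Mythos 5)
Your argument is correct in substance for the main case $\Delta_z\neq0$, and it follows a genuinely different route from the paper's. The paper works with the Casimirs $f,g,h$ of $P$, $Q$, $P+Q$ directly: it introduces integrating factors $\mu,\nu$ (so that $\diff f$, $\diff g$ are $\mu$, $\nu$ times the rows of $P$, $Q$), pulls Blaschke's formula (\ref{BFormula}) back through the coordinate change $(x,y,z)\to(f,g,z)$, and uses closedness of $\diff f,\diff g$ to convert the derivatives of $\mu,\nu$ into $\divergence \sgrad{\mathbf p}{z}$ and $\divergence \sgrad{\mathbf q}{z}$, the whole point being that $\mu,\nu$ cancel at the end. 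You bypass Casimirs and integrating factors entirely: you read off the $\diff x\wedge\diff y$ component by restricting to $\{z=c\}$, identify the induced web as the one directed by $\sgrad{\mathbf p}{z}$, $\sgrad{\mathbf q}{z}$ and their sum, and compute its Blaschke curvature via the connection form $\gamma$ with $\diff\omega_i=\gamma\wedge\omega_i$. This is cleaner and explains conceptually why only divergences of the Hamiltonian fields survive: the cancellation the paper achieves by hand is your cancellation of the $\Omega$-proportional terms in $\diff\gamma$. The two facts you invoke but only sketch do check out: writing the web as level sets of $u$, $v$, $w(u,v)$ and taking $\omega_1=w_u\,\diff u$, $\omega_2=w_v\,\diff v$, $\omega_3=-\diff w$, one gets $\diff\gamma=\partial_u\partial_v\log(w_u/w_v)\,\diff u\wedge\diff v$, so the curvature in the paper's normalization (\ref{BFormula}) is indeed $-2\,\diff\gamma$; and your $\gamma$, $\diff\gamma$ and the identity $\diff\phi\wedge\iota_X\Omega=X(\phi)\,\Omega$ reproduce exactly the bracket structure of (\ref{curvFormula}), the residual sign/factor bookkeeping you flag being real but resolving consistently once the convention $\sgrad{\mathbf p}{z}=P\,\diff z$ is fixed throughout.

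The one genuine flaw is your treatment of the locus $\Delta_z=0$. Each of your three surface computations produces only one component of $\Theta(P,Q)$, so the remark that ``on an overlap they agree since each gives $\Theta(P,Q)$'' does not do what you need: at a point with $\Delta_z=0$ the computations on $\{x=c\}$ and $\{y=c\}$ give the $\diff y\wedge\diff z$ and $\diff z\wedge\diff x$ components, but say nothing about the $\diff x\wedge\diff y$ component, and the convention ``omit that term'' is precisely the assertion that this component vanishes there. Nor can you argue by continuity from $\{\Delta_z\neq0\}$, since $\Delta_z$ can vanish identically for a Kronecker pencil (e.g.\ $P=\partial_y\wedge\partial_z$, $Q=\partial_x\wedge\partial_y$). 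The missing step is short: $\Theta=\pi^*\theta$ with $\pi=(f,g)$, hence $\Theta$ is a smooth function times $\diff f\wedge\diff g$, whose $\diff x\wedge\diff y$ coefficient is $f_xg_y-f_yg_x=\mu\nu\Delta_z$ and therefore vanishes exactly where $\Delta_z$ does; this is how the paper disposes of the case $\Delta_z=0$, and you should add such an argument.
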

The proof can be found in Section \ref{proof}.
\begin{remark}
	Note that (\ref{BFormula}) involves third derivatives of $f_1, f_2, f_3$, while (\ref{curvFormula}) involves only first and second derivatives of the components of $\PP$ and $\QQ$.
\end{remark}
\begin{statement}
	If $\mathcal R$ and $\mathcal S$ are two non-proportional brackets from the pencil $\{ \alpha \PP + \beta \QQ\}$, then $\Theta(\mathcal R,\mathcal S) = \Theta(\PP,\QQ)$.
\end{statement}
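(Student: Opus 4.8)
The plan is to prove the Proposition by a direct substitution into the explicit formula (\ref{curvFormula}) of Theorem \ref{curvThm}, which applies verbatim to any Kronecker pair of rank two — in particular to $(R,S)$, since changing the basis of the pencil alters neither the rank nor the Kronecker property. Write $R = aP + bQ$ and $S = cP + dQ$; that $R$ and $S$ are non-proportional means exactly $\delta := ad - bc \neq 0$, and this is the only place the hypothesis is used.

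First I would record three linearity facts. (1) $\sgrad{}{z}$ depends linearly on the bracket and $\divergence$ is linear, so $\divergence\sgrad{\mathbf r}{z} = a D_p + b D_q$ and $\divergence\sgrad{\mathbf s}{z} = c D_p + d D_q$, where $D_p := \divergence\sgrad{\mathbf p}{z}$, $D_q := \divergence\sgrad{\mathbf q}{z}$. (2) By definition $\Delta_z$ is the $2\times2$ determinant of the matrix whose columns are the pairs $(\{z,x\},\{z,y\})$ for the two brackets of the pair; passing from $(P,Q)$ to $(R,S)$ multiplies that matrix on the right by $\left(\begin{smallmatrix} a & c \\ b & d\end{smallmatrix}\right)$, hence $\Delta_z(R,S) = \delta\cdot\Delta_z(P,Q)$, and likewise $\Delta_x(R,S) = \delta\cdot\Delta_x(P,Q)$, $\Delta_y(R,S) = \delta\cdot\Delta_y(P,Q)$. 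Since $\delta \neq 0$, the set of indices among $x,y,z$ with vanishing $\Delta$ is unchanged, so clause (5) of the theorem selects the same terms for $(R,S)$ as for $(P,Q)$. (3) $\PoissonBracket_{\mathbf r} = a\PoissonBracket_{\mathbf p} + b\PoissonBracket_{\mathbf q}$ and $\PoissonBracket_{\mathbf s} = c\PoissonBracket_{\mathbf p} + d\PoissonBracket_{\mathbf q}$.

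Then I would substitute. Using (1)--(3) and that $a,b,c,d$ are constants — so they pass freely through the brackets — the $z$-term of $\Theta(R,S)$, divided by $2\,\diff x\wedge\diff y$, becomes
\[
\frac{1}{\delta}\Bigl[\,a\{z,\tfrac{cD_p+dD_q}{\Delta_z}\}_{\mathbf p} + b\{z,\tfrac{cD_p+dD_q}{\Delta_z}\}_{\mathbf q} - c\{z,\tfrac{aD_p+bD_q}{\Delta_z}\}_{\mathbf p} - d\{z,\tfrac{aD_p+bD_q}{\Delta_z}\}_{\mathbf q}\,\Bigr].
\]
Expanding by linearity, every summand is one of the four expressions $A := \{z,D_p/\Delta_z\}_{\mathbf p}$, $B := \{z,D_q/\Delta_z\}_{\mathbf p}$, $C := \{z,D_p/\Delta_z\}_{\mathbf q}$, $E := \{z,D_q/\Delta_z\}_{\mathbf q}$; the coefficients of $A$ and of $E$ cancel, the coefficient of $B$ is $(ad-bc)/\delta = 1$, and that of $C$ is $-(ad-bc)/\delta = -1$. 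Hence the $z$-term of $\Theta(R,S)$ equals $2(B - C)\,\diff x\wedge\diff y$, which is exactly the $z$-term of $\Theta(P,Q)$; the identical computation handles the $x$- and $y$-terms under $\circlearrowleft$, so $\Theta(R,S) = \Theta(P,Q)$.

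This is an elementary verification and I expect no real obstacle; the only thing to be careful about is the bookkeeping with the cyclic sum and the ``omitted term'' clause, handled by observation (2). If one prefers to minimize algebra, one can instead note that $GL(2,\Complex)$ acting on bases of the pencil is generated by the rescalings $P\mapsto cP$, the swap $(P,Q)\mapsto(Q,P)$, and the shear $(P,Q)\mapsto(P,P+Q)$, and check invariance of (\ref{curvFormula}) under each: the first two are immediate, and for the shear one uses $\divergence\sgrad{\mathbf p+\mathbf q}{z} = D_p + D_q$ and $\Delta_z(P,P+Q) = \Delta_z(P,Q)$, after which the desired equality collapses to the derivation property of $\PoissonBracket_{\mathbf p}$ together with $\PoissonBracket_{\mathbf p+\mathbf q} = \PoissonBracket_{\mathbf p} + \PoissonBracket_{\mathbf q}$. (A coordinate-free proof is also possible — the symplectic foliations of all $\alpha P+\beta Q$ descend under the reduction map to one common one-parameter family of curve-foliations on the base surface, with $W(P,Q)$ and $W(R,S)$ two triples from it, and one invokes the behaviour of Blaschke curvature on Veronese webs — but it requires setting up more machinery than the formula-based argument.)
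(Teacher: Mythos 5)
Your proposal is correct and follows essentially the paper's route: both verify the invariance directly from the explicit formula (\ref{curvFormula}), the only difference being that you carry out the substitution for a general change of basis $R=aP+bQ$, $S=cP+dQ$ in one computation (also checking the scaling $\Delta_\bullet(R,S)=(ad-bc)\Delta_\bullet(P,Q)$, which settles the omitted-term clause), whereas the paper checks invariance only under the three transformations $(P,Q)\mapsto(Q,P)$, $(P,Q)\mapsto(\alpha P,Q)$, $(P,Q)\mapsto(P,P+Q)$ and notes that they generate $\GL(2)$ --- exactly the alternative you mention at the end.
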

\begin{proof}
	We need to show that the curvature does not change when we apply a non-degenerate linear transformation $(\PP,\QQ) \to (\mathcal R,\mathcal S)$. Formula (\ref{curvFormula}) implies that
	\begin{enumerate}
		\item $\Theta(\QQ,\PP) = \Theta(\PP,\QQ)$.
		\item $\Theta(\alpha \PP, \QQ) = \Theta(\PP,\QQ)$.
		\item $\Theta(\PP, \PP + \QQ) = \Theta(\PP, \QQ)$.	
	\end{enumerate}
	Now it suffices to note that these three transformations generate $\GL(2)$.
\end{proof}
So, the curvature of a Poisson pencil $\Pi$ is well-defined. We will denote it by $\Theta(\Pi)$.
The following result obviously follows from Theorems \ref{BThm} and \ref{GZThm}.
\begin{statement}
A generic Poisson pencil $\Pi$ on a $3$-manifold is locally flat if and only if $\Theta(\Pi) = 0$.
\end{statement}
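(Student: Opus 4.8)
The plan is to chain together the two characterizations of triviality that are already at our disposal, together with an elementary remark about pullbacks. By Theorem~\ref{GZThm}, the pencil $\Pi$ is flat if and only if the web $W(P,Q)$ obtained by the Gelfand--Zakharevich reduction is trivial. By Theorem~\ref{BThm}, the web $W(P,Q)$ is trivial if and only if its Blaschke curvature form $\theta$ vanishes. Since by definition $\Theta(\Pi) = \pi^*(\theta)$, it remains only to relate the vanishing of $\theta$ on the base $\R^2$ to the vanishing of its pullback on $M$, and to recall that $\Theta(\Pi)$ does not depend on the choice of generators of $\Pi$.

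First I would note that $\Theta(\Pi)$ is well-defined, i.e. independent of the choice of a pair $(P,Q)$ generating $\Pi$: this is exactly the content of the Proposition established just above, which follows from formula~(\ref{curvFormula}). Next I would observe that the reduction map $\pi$ of~(\ref{redMap}) is a submersion. Indeed, its components are $f$ and $g$, and Proposition~\ref{p1} guarantees that $\diff f$ and $\diff g$ are linearly independent at every point of $U$. Because a submersion induces an injective map on differential forms, $\pi^*(\theta) = 0$ if and only if $\theta = 0$. Combining the three equivalences, namely $\Pi$ flat $\Longleftrightarrow$ $W(P,Q)$ trivial $\Longleftrightarrow$ $\theta = 0$ $\Longleftrightarrow$ $\Theta(\Pi) = 0$, yields the statement.

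The one point that requires a moment's care — the ``main obstacle'', such as it is — is the passage between $\theta$ and its pullback $\pi^*(\theta)$, which must not be taken for granted: it rests precisely on $\pi$ being a submersion, and this in turn is where the Kronecker hypothesis (via Proposition~\ref{p1}) enters. Everything else is a formal consequence of Theorems~\ref{BThm} and~\ref{GZThm}, which is why the assertion is stated as an immediate corollary.
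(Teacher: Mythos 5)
Your proposal is correct and is essentially the argument the paper intends: it notes the result ``obviously follows'' from Theorems~\ref{BThm} and~\ref{GZThm}, i.e. the same chain of equivalences you give, with your observation that $\pi$ is a submersion (via Proposition~\ref{p1}) merely making explicit why $\pi^*(\theta)=0$ forces $\theta=0$.
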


\section{Torsion-free connection compatible with a Poisson pencil}
\begin{theorem}\label{conThm}
Let $\Pi$ be a generic pencil on a $3$-manifold. Then
\begin{enumerate}
\item Locally, there exists a (non-unique) torsion-free connection $\nabla$ compatible with the pencil, which means that
$\nabla \PP = 0$ for any $\PP \in \Pi$.
\item For any connection $\nabla$ with this property we have
\begin{align*}
	\Theta(\Pi) = -4\,\mathrm{Alt}\, \mathrm{Ric} \, \nabla,
\end{align*}
where $\mathrm{Ric} \,\nabla$ denotes the Ricci tensor and $\mathrm{Alt}$ means alternation.
\end{enumerate}
\end{theorem}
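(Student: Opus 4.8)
The plan is to treat the two parts separately, both times working in the frame attached to the Gelfand--Zakharevich reduction of Section~\ref{GZRed}. So I would start from local Casimirs $f,g,h$ of $P,Q,P+Q$: by Proposition~\ref{p1} their differentials are pairwise independent, and by Propositions~\ref{p1}--\ref{p2} we have $h=h(f,g)$. Completing $f,g$ to a coordinate system $f,g,w$, the field $\partial_w$ spans the common characteristic line, $\Ker P=\langle\diff f\rangle$ and $\mathrm{Im}\,P=\Ker{\diff f}=\langle\partial_g,\partial_w\rangle$, and similarly for $Q$ and $P+Q$. Since the pencil is Kronecker of rank two, no bracket in it vanishes, so $P=a\,\partial_g\wedge\partial_w$ and $Q=b\,\partial_f\wedge\partial_w$ with $a,b$ nowhere zero; writing $\mathrm{Im}(P+Q)=\langle b\,\partial_f+a\,\partial_g,\partial_w\rangle=\Ker{\diff h}$ and using $\diff h=h_f\,\diff f+h_g\,\diff g$ one gets $a/b=-h_f/h_g$, so $a/b$ is a function of $f,g$ alone. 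This identity is the geometric fact that will make everything fit.

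\smallskip
\noindent\emph{Part 1.}
I would just solve for the Christoffel symbols $\Gamma^k_{ij}$ of $\nabla$ in the coordinates $f,g,w$; torsion-freeness reads $\Gamma^k_{ij}=\Gamma^k_{ji}$ because coordinate fields commute. Expanding $\nabla(a\,\partial_g\wedge\partial_w)=0$ and $\nabla(b\,\partial_f\wedge\partial_w)=0$ produces, for each $i$, the off-diagonal relations $\Gamma^f_{ig}=\Gamma^f_{iw}=\Gamma^g_{if}=\Gamma^g_{iw}=0$ and the trace relations $\Gamma^g_{ig}+\Gamma^w_{iw}=-\partial_i\log a$, $\Gamma^f_{if}+\Gamma^w_{iw}=-\partial_i\log b$. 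The off-diagonal relations wipe out every $\Gamma^f$ and $\Gamma^g$ except $\Gamma^f_{ff}$ and $\Gamma^g_{gg}$; the trace relations then become triangular and I can read off their solution (for instance $\Gamma^w_{fw}=-\partial_f\log a$, $\Gamma^f_{ff}=\partial_f\log(a/b)$, $\Gamma^g_{gg}=-\partial_g\log(a/b)$), with $\Gamma^w_{ff},\Gamma^w_{fg},\Gamma^w_{gg}$ left completely free. The only way the system can fail is that its $w$-component forces $\partial_w\log a=\partial_w\log b$, i.e. $\partial_w\log(a/b)=0$ --- which holds because $a/b$ is a function of $f,g$. This gives the desired $\nabla$ (and $\nabla R=0$ for all $R\in\Pi$ follows by linearity), with the three free functions explaining non-uniqueness.

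\smallskip
\noindent\emph{Part 2.}
Here the plan is to route the antisymmetric Ricci through the connection induced on $\Lambda^3\T M$. Let $\nabla$ be any torsion-free connection killing $\Pi$. Since $\Ker P=\langle\diff f\rangle$ is $\nabla$-parallel, $\nabla\diff f=\omega_P\otimes\diff f$ for a $1$-form $\omega_P$; torsion-freeness and $\diff(\diff f)=0$ make $\nabla\diff f$ symmetric, so $\omega_P=\psi_P\,\diff f$, and likewise $\nabla\diff g=\psi_Q\,\diff g\otimes\diff g$, $\nabla\diff h=\psi_H\,\diff h\otimes\diff h$. Substituting $\diff h=h_f\,\diff f+h_g\,\diff g$ into the last equation and comparing coefficients in $S^2\T^*M$ (this is where $h=h(f,g)$ enters) I expect to get $\psi_H=h_{fg}/(h_fh_g)$ and $\psi_P=\partial_f\log(h_g/h_f)$, $\psi_Q=\partial_g\log(h_f/h_g)$. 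Since $\psi_P$ is then a function of $f,g$, $\diff\omega_P=\diff\psi_P\wedge\diff f=-(\partial_g\psi_P)\,\diff f\wedge\diff g$, whereas Blaschke's formula~(\ref{BFormula}) applied to $W(P,Q)$ with $f_1=f,f_2=g,f_3=h$ and pulled back by the reduction map gives $\Theta(\Pi)=\pi^*\theta=2(\partial_g\psi_P)\,\diff f\wedge\diff g$; hence $\diff\omega_P=-\tfrac12\,\Theta(\Pi)$. Next I would use $\Lambda^3\T M\cong\Lambda^2(\mathrm{Im}\,P)\otimes(\T M/\mathrm{Im}\,P)$: the first factor is trivialized by the parallel section $P$, the second is canonically dual to $\Ker P=\langle\diff f\rangle$ with connection form $-\omega_P$, so the curvature of $\Lambda^3\T M$ equals $-\diff\omega_P=\tfrac12\,\Theta(\Pi)$. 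Finally, for a torsion-free connection the first Bianchi identity gives $\mathrm{Ric}(Y,Z)-\mathrm{Ric}(Z,Y)=-\tr{R(Y,Z)}$, and $\tr{R(Y,Z)}$ is precisely the curvature of $\Lambda^3\T M$; combining, $\mathrm{Ric}(Y,Z)-\mathrm{Ric}(Z,Y)=-\tfrac12\,\Theta(\Pi)$, i.e. $\Theta(\Pi)=-4\,\mathrm{Alt}\,\mathrm{Ric}\,\nabla$. This also makes the right-hand side independent of the chosen $\nabla$, as required.

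\smallskip
\noindent\emph{Main obstacle.}
The delicate step in Part 1 is verifying consistency of the linear system for the $\Gamma^k_{ij}$: once torsion-freeness is imposed, the single potential obstruction collapses to $\partial_w\log(a/b)=0$, and this is exactly where compatibility of $P,Q$ (via $h=h(f,g)$) and the Kronecker, rank-two hypothesis (existence and non-vanishing of $a,b$) are used. In Part 2 the computational core is the $S^2\T^*M$ comparison yielding $\psi_P=\partial_f\log(h_g/h_f)$ in closed form; after that, the only hazard is the numerical constant --- one must match the normalizations built into $\mathrm{Alt}$, into the Ricci tensor, and into Blaschke's $\theta$ so that the coefficient is exactly $-4$ with the right sign.
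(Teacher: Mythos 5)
Your proposal is correct, but it takes a genuinely different route from the paper's. For part 1 the paper chooses a frame $X,Y,Z$ with $P=X\wedge Y$, $Q=X\wedge Z$, parametrizes all compatible connections by three $1$-forms $\alpha,\beta,\gamma$ as in (\ref{compCond}), and observes that the torsion-free equations (\ref{commutators2}) are solvable because of the commutator relations (\ref{commutators}); you instead work in Casimir-adapted coordinates $f,g,w$, write $P=a\,\partial_g\wedge\partial_w$, $Q=b\,\partial_f\wedge\partial_w$, and solve directly for the Christoffel symbols. Your computation is right: after imposing symmetry the only consistency condition is $\partial_w\log(a/b)=0$, which is exactly compatibility of the pair (via $a/b=-h_f/h_g$ with $h=h(f,g)$), and your solution even makes the non-uniqueness explicit through the free $\Gamma^w_{ff},\Gamma^w_{fg},\Gamma^w_{gg}$. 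For part 2 the paper gives essentially no detail (``write down both forms in coordinates, bringing $P$ to a constant form''), whereas your argument is conceptual and checks out step by step: $\nabla P=0$ and torsion-freeness force $\nabla\diff f=\psi_P\,\diff f\otimes\diff f$ with $\psi_P=\partial_f\log(h_g/h_f)$ (your $S^2\T^*M$ comparison using $\diff h=h_f\diff f+h_g\diff g$ is correct), Blaschke's formula (\ref{BFormula}) gives $\diff\omega_P=-\tfrac12\Theta(\Pi)$, the factor $\Lambda^2(\mathrm{Im}\,P)$ is flat because $P$ is a nonvanishing parallel section, $\T M/\mathrm{Im}\,P\cong(\Ker{P})^*$ is a parallel identification, and the first Bianchi identity for torsion-free connections converts $\mathrm{tr}\,R(Y,Z)$, i.e.\ the curvature of $\Lambda^3\T M$, into $-(\mathrm{Ric}(Y,Z)-\mathrm{Ric}(Z,Y))$. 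What your route buys is an invariant explanation of why $\mathrm{Alt}\,\mathrm{Ric}$ is the same for every compatible torsion-free $\nabla$ and why it equals the Blaschke curvature up to a universal constant (it is the curvature of the determinant line bundle); what the paper's route buys is brevity. The only caveat is the one you flag yourself: the exact constant $-4$ and its sign hinge on conventions for $\mathrm{Ric}$ (which slot is traced) and for $\mathrm{Alt}$ (whether the factor $\tfrac12$ is included), which the paper leaves implicit; with $\mathrm{Ric}(Y,Z)=\mathrm{tr}\,(X\mapsto R(X,Y)Z)$ and $\mathrm{Alt}$ containing $\tfrac12$ your computation yields exactly $\Theta(\Pi)=-4\,\mathrm{Alt}\,\mathrm{Ric}\,\nabla$, so you should simply fix these conventions at the outset.
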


So, the curvature of a Poisson pencil can be defined using a torsion-free connection compatible with this pencil. The proof can be found in Section \ref{proof2}.

\begin{remark}
For any torsion-free connection, the first Bianchi identity implies that $$\mathrm{Alt}\, \mathrm{Ric}  = \frac{1}{2}\,\mathrm{Tr}\, \mathrm{R}$$
where $\mathrm R$ is the Riemann curvature tensor. So,
\begin{align*}
	\Theta(\Pi)(X,Y) = -2\,\mathrm{Tr}\, \mathrm{R}(X,Y).
\end{align*}
\end{remark}
\begin{remark}
 The skew-symmetric part of the Ricci tensor measures the deformation of infinitesimal volume by holonomy operators. So, if a connection is compatible with a non-degenerate bilinear form, such as a Riemannian metric, then its Ricci tensor must be symmetric. By this reason, the Ricci tensor of a Riemannian connection is always symmetric. However, the Ricci tensor of a connection compatible with a degenerate form, such as a Poisson structure on a $3$-manifold, is not necessarily symmetric.
\end{remark}
\begin{remark}
 By using a partition of unity, a global torsion-free connection compatible with a generic Poisson pencil can be constructed.
\end{remark}

\section{Curvature of linear pencils and the argument translation method}
\begin{definition}
	Let $\g$ be a Lie algebra and $\mathcal A$ be a skew-symmetric bilinear form on it.
	Then $\mathcal A$ can be considered as a Poisson tensor on the dual space $\g^{*}$. Assume that this tensor is compatible with the Lie-Poisson tensor. In this case, the Poisson pencil $\Pi(\g, \mathcal A)$ generated by these two tensors is called the \textit{linear pencil} associated with the pair $(\g, \mathcal A)$.
\end{definition}

The following is well known.
\begin{statement}
	A form $\mathcal A$ on $\g$ is compatible with the Lie-Poisson bracket if and only if this form is a Lie algebra $2$-cocycle, i.e.
	\begin{align*}
		\mathcal A([x,y],z) + \mathcal A([y,z],x) + \mathcal A([z,x],y) = 0
	\end{align*}
	for any $x, y, z \in \g$.
\end{statement}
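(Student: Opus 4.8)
The plan is to unwind the definition of compatibility into the Jacobi identity and then to verify the latter only on linear functions. Write $P$ for the Lie--Poisson tensor on $\g^{*}$ and $Q$ for the constant Poisson tensor on $\g^{*}$ determined by $\mathcal A$; a constant bivector on a vector space automatically satisfies the Jacobi identity, so $Q$ is indeed Poisson. Since $P$ and $Q$ are separately Poisson, in the expansion of the Jacobiator of the combined bracket only the mixed contribution survives, and hence $(P,Q)$ is a Poisson pair if and only if the Jacobiator
\[
J(f,g,h)=\{\{f,g\},h\}+\{\{g,h\},f\}+\{\{h,f\},g\}
\]
of the bracket $\{\cdot,\cdot\}=\{\cdot,\cdot\}_{\mathbf p}+\{\cdot,\cdot\}_{\mathbf q}$ vanishes identically on $\g^{*}$.

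First I would note that $J$ is a derivation in each of its three arguments: this is the standard consequence of the Leibniz rule $\{f,gh\}=g\{f,h\}+h\{f,g\}$ for brackets coming from a bivector, the cross-terms cancelling pairwise thanks to the skew-symmetry of $\{\cdot,\cdot\}$. Because $\g^{*}$ carries global linear coordinates, a derivation is determined by its values on the coordinate functions, so by iteration $J$ is determined by the constants $J(x_i,x_j,x_k)$, with $x_i$ a basis of $\g$ viewed as linear functions on $\g^{*}$. In particular $J\equiv 0$ if and only if $J(x,y,z)=0$ for all $x,y,z\in\g$.

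Next I would compute $J$ on linear functions. By the very definition of the Lie--Poisson bracket, $\{x,y\}_{\mathbf p}=[x,y]$, again a linear function, while $\{x,y\}_{\mathbf q}=\mathcal A(x,y)$ is a constant; thus $\{x,y\}=[x,y]+\mathcal A(x,y)$. Since a constant function has vanishing bracket with everything,
\[
\{\{x,y\},z\}=\{[x,y],z\}=[[x,y],z]+\mathcal A([x,y],z),
\]
and summing cyclically over $x,y,z$, the terms $[[x,y],z]+[[y,z],x]+[[z,x],y]$ cancel by the Jacobi identity in $\g$, leaving
\[
J(x,y,z)=\mathcal A([x,y],z)+\mathcal A([y,z],x)+\mathcal A([z,x],y).
\]
By the previous step this constant function vanishes for all $x,y,z$ precisely when $\mathcal A$ is a $2$-cocycle, which proves the claim.

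The only point that deserves care is the reduction to linear functions, i.e. that checking Jacobi on a generating set of functions suffices; this rests on the derivation (equivalently, tensorial) character of the Jacobiator, which one may also read off from the identity $J(f,g,h)=[\pi,\pi](\diff f,\diff g,\diff h)$ up to a constant factor, with $\pi=P+Q$ and $[\pi,\pi]$ the Schouten bracket, a genuine trivector field. Everything else is direct substitution, so I anticipate no real obstacle; alternatively the whole argument can be carried out at the level of Schouten brackets, where $[P,Q]$ turns out to be the constant trivector on $\g^{*}$ whose coefficients are those of the Chevalley--Eilenberg differential of $\mathcal A$, and whose vanishing is the cocycle condition by definition.
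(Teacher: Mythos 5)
Your argument is correct and complete: reducing compatibility to the vanishing of the Jacobiator of $P+Q$ (the paper itself notes that it suffices to check the sum), observing that the Jacobiator is a tri-derivation and hence determined by its values on linear functions, and computing it on $x,y,z\in\g$ to obtain exactly the cocycle expression $\mathcal A([x,y],z)+\mathcal A([y,z],x)+\mathcal A([z,x],y)$ is the standard route. The paper states this proposition as well known and gives no proof, so there is nothing to compare against; your write-up (equivalently, the Schouten-bracket formulation $[P,Q]$ being the Chevalley--Eilenberg differential of $\mathcal A$, which you mention) is precisely the argument the author is implicitly relying on.
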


\begin{example}
	Let $\g$ be a Lie algebra and $\xi \in \g^*$. Then $\mathcal A_\xi(x,y) = \langle \xi, [x,y]\rangle$ is a $2$-cocycle. 
The Poisson bracket defined by $\mathcal A_\xi$ on $\g^*$ is usually refereed to as a \textit{ ``frozen argument''} bracket. This bracket is related to the so-called \textit{argument translation method} \cite{MF, MF2} which appears in the theory of integrable systems.
	
\end{example}

For a linear pencil $\Pi(\g, \mathcal A)$, it is possible to rewrite (\ref{curvFormula}) in the following form.
\begin{align}\label{linCurvFormula}
\Theta(\Pi) = 2 \, \sum_\circlearrowleft \frac{\mathcal A(z, \Delta_z) \, \tr (\ad z)}{\Delta_z^2} \, \diff x \wedge \diff y \, 
\end{align}
where
\begin{enumerate}
\item $x,y,z$ is any basis in $\g$ (treated as a coordinate system in $\g^*$);
\item  $\circlearrowleft$ denotes the sum over cyclic permutations of $x,y,z$;
\item $\Delta_z$ is
\begin{align*}\Delta_z = \left|\,\vphantom{\int\limits_{a}{b}}\begin{matrix}
[z,x]& \mathcal A(z,x) \\
[z,y] & \mathcal A(z,y )\end{matrix}\,\right|; \end{align*}
\item $\ad z$ means the adjoint operator $\ad z = [z, *]$.
\end{enumerate}
\begin{remark}
Note that $\Delta_z$ is now a linear function on $\g^*$, i.e. an element of $\g$, so the expression $\mathcal A(z, \Delta_z)$ is well-defined.
\end{remark}
\begin{example}
	If $\g$ is semisimple, then $\tr \ad w = 0$ for any $w \in \g$. So,  $\Pi(\g, \mathcal A)$ is flat for any $\mathcal A$.
\end{example}
\begin{example}
Let $\g$ be a real Lie algebra given by 
\begin{align*}
	[z,x] = x,\, &[z,y] = ay,\\
	[x,y] &= 0
\end{align*}
and $\xi \in \g^*$. Consider the pencil $\Pi(\g, \mathcal A_\xi)$. This pencil is generic if $a \neq 0$ and at least one of the numbers $ \langle \xi,x \rangle,  \langle \xi,y \rangle$ is not zero. The curvature is given by
\begin{align}\label{curvExample1}
\Theta = \dfrac{2(1-a^2)\langle \xi,x \rangle\langle \xi,y \rangle}{a\left(\langle \xi,y \rangle x  - \langle \xi,x \rangle y\right)^2}\,\diff x \wedge \diff y.
\end{align}
So, for $a = \pm 1$ the pencil is flat for any $\xi$. For $a \neq \pm 1$ the pencil is not flat for generic $\xi$. However, it is flat if $\langle \xi,x \rangle = 0$ or $\langle \xi,y \rangle = 0$.\par
Note that the points in which the denominator of (\ref{curvExample1}) is zero are exactly those in which the pencil is not generic. This is always so for linear pencils. For non-linear pencils, this is not necessarily the case, see Example \ref{nonFlatLiePencil}.
\end{example}
\begin{remark}
	To author's knowledge, the only known example of a non-flat linear pencil was provided by A.\,Konyaev \cite{Konyaev}. The dimension of the corresponding Lie algebra is five.
\end{remark}
\begin{example}
Let $\g$ be a three-dimensional Lie algebra given by 
\begin{align*}
	[x,y] &= y,\\
	[z,x] = [&z,y] = 0.
\end{align*}
A form $\mathcal A$ on this algebra is a cocycle if $\mathcal A(y,z) = 0$. The pencil $\Pi(\g, \mathcal A)$  is generic if $\mathcal A(x,z) \neq 0$. The curvature is given by
$$
\Theta = \dfrac{2\,\mathcal A(x,y)}{\mathcal A(x,z)\,y^2}\,\diff y \wedge \diff z.
$$
\end{example}


%

\section{Curvature of Lie pencils}
\begin{definition}
	Two Lie algebra structures on a vector space $V$ are called \textit{compatible}, if any linear combination of them is a Lie algebra structure again. The set of all linear combinations of two compatible Lie structures is called a \textit{Lie pencil}.
\end{definition}
Analogously, we may say that two Lie structures on $V$ are compatible if they define compatible Poisson brackets on $V^*$.
Obviously, for two Lie brackets to be compatible, it is enough to require that their sum is also a Lie bracket.\par
Let $\PP,\QQ$ be two compatible Lie structures on $V$. We will denote the corresponding Poisson pencil on $V^*$ by $\Pi(V,\PP,\QQ)$.

\begin{remark}
	Poisson pencils related to Lie pencils also appear in integrable systems. In particular, the Manakov top \cite{Manakov} and the Clebsch top are bi-Hamiltonian with respect to a certain Lie pencil \cite{Bolsinov}.
\end{remark}

For a pencil $\Pi(V,\PP,\QQ)$, Formula (\ref{curvFormula}) can be rewritten as
\begin{align}\label{LieCurvFormula}
\Theta(\Pi) = 2\sum_\circlearrowleft\left(\,\left\{z,\frac{\tr (\adi{\mathbf q}{z})}{\Delta_z}\right\}_{\!\!\mathbf p} - \left\{z,\frac{\tr (\adi{\mathbf p}{z})}{\Delta_z}\right\}_{\!\!\mathbf q}\,\right) \, \diff x \wedge \diff y \,
\end{align}
where
\begin{enumerate}
\item $x,y,z$ is any basis in $V$ (treated as a coordinate system in $V^*$);
\item  $\circlearrowleft$ denotes the sum over cyclic permutations of $x,y,z$;
\item $\adi{\mathbf p}{z}$ and $\adi{\mathbf q}{z}$ denote the adjoint operators of the Lie structures $\PP$ and $\QQ$ respectively.
\end{enumerate}
\begin{example}
Suppose that $\PP$ is a semisimple Lie structure. Then $\Pi(V,\PP,\QQ)$ is flat. Indeed, if $\PP$ is semisimple, then $\PP + \eps \QQ$ is also semisimple for small $\eps$. Take $\PP$ and $\PP + \eps \QQ$ as generators of $\Pi$. Since $\tr \adi{\mathbf p}{z} = \tr \adi{\mathbf p+ \eps \mathbf q}{w} = 0$ for any $w \in V$, the curvature of $\Pi$ vanishes.
\end{example}
\begin{example}\label{nonFlatLiePencil}
A Lie Pencil is given by
\begin{align*}
\begin{matrix}
[z,x]_{\mathbf p} = x, & [z,x]_{\mathbf q} = 0,\\
[z,y]_{\mathbf p} = -y,& [z,y]_{\mathbf q} = x + y,\\
[x,y]_{\mathbf p} = 0, & [x,y]_{\mathbf q} = 0.
\end{matrix}
\end{align*}
Then
$$
\Theta = -\frac{4\,\diff x \wedge \diff y}{(x+y)^2},
$$
so the pencil is not flat. This can be proved without computing the curvature, see Example \ref{nonFlatLiePencil3}. Note that the pencil is not generic for $x=0$, however the curvature form can be continuously extended to the set $\{x = 0, y \neq 0\}$.
\end{example}
\section{Curvature and singularities}

It is not typical that a pencil is generic everywhere. As a rule, this condition is satisfied on an open and everywhere dense set. The complement to this set is the \textit{singular set} of the pencil, which we denote by $\Sigma$. \par
Let $\Pi$ be a pencil on a $3$-manifold $M$. First suppose that $x \in \Sigma$ and $\rank \Pi(x) = 2$. Then there exists a unique up to proportionality $\PP \in \Pi$ such that $\PP(x) = 0$. The linear part of the Poisson tensor $\PP$ defines a linear Poisson bracket on $\T_xM$. Now take $\QQ \in \Pi$ such that $\QQ(x) \neq 0$. Its restriction to $\T_xM$ defines a constant Poisson bracket on $\T_xM$. This bracket is compatible with the bracket given by the linear part of $\PP$. Thus we obtain a linear pencil on $\T_xM$. This pencil is called the \textit{linearization} of $\Pi$ at $x$.\par
Decomposing $\PP$ and $\QQ$ in Taylor series and applying (\ref{curvFormula}) we obtain the following.
\begin{statement}\label{flatLin}
If the linearization of a flat pencil is generic, then it is flat.
\end{statement}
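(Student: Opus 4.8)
The plan is to substitute Taylor expansions of $P$ and $Q$ into the explicit formula~(\ref{curvFormula}) and to recognise $\Theta(\Pi_{\mathrm{lin}})$, the curvature of the linearization, as the lowest-order homogeneous part of $\Theta(\Pi)$. Since $\Pi$ is flat, the criterion that a Kronecker rank-two pencil on a $3$-manifold is flat precisely when its curvature form vanishes gives $\Theta(\Pi)\equiv 0$ near $x$, so all three coefficients on the right-hand side of~(\ref{curvFormula}) are identically zero. I would choose linear coordinates $x,y,z$ on $\T_xM$ and use them as local coordinates on $M$ centred at $x$, so that $P=P_1+P_2+\cdots$ with $P_0=0$ (because $P(x)=0$) and $Q=Q_0+Q_1+\cdots$ with $Q_0=Q(x)\neq 0$, the subscript being the degree of homogeneity; by construction $P_1$ is the linear Poisson structure and $Q_0$ the constant bracket generating $\Pi_{\mathrm{lin}}$, so $\Theta(\Pi_{\mathrm{lin}})$ is computed by plugging $(P_1,Q_0)$ into~(\ref{curvFormula}), and there $\divergence\sgrad{\mathbf q_0}{z}=0$ because $Q_0$ is constant.

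The heart of the argument is a homogeneity count using the asymmetry $P(x)=0\neq Q(x)$. Writing $\{z,\,\cdot\,\}_{\mathbf p}=\sum_j P^{zj}\partial_j$ and $\{z,\,\cdot\,\}_{\mathbf q}=\sum_j Q^{zj}\partial_j$, the operator $\{z,\,\cdot\,\}_{\mathbf p}$ shifts homogeneous degree by one more than $\{z,\,\cdot\,\}_{\mathbf q}$, since $P^{zj}$ vanishes at $x$ while $Q^{zj}$ need not; also, when $\Delta_z^{\mathrm{lin}}$, the determinant $\Delta_z$ built from $(P_1,Q_0)$, is not identically zero, it is the lowest-order part of $\Delta_z$ and is homogeneous of degree one. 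Tracking degrees in the $\diff x\wedge\diff y$-coefficient of $\Theta(\Pi)$, the summand $\{z,\,\divergence\sgrad{\mathbf q}{z}/\Delta_z\}_{\mathbf p}$ has degree $\geq -1$, whereas $\{z,\,\divergence\sgrad{\mathbf p}{z}/\Delta_z\}_{\mathbf q}$ has degree $\geq -2$ with degree-$(-2)$ part exactly $\{z,\,\divergence\sgrad{\mathbf p_1}{z}/\Delta_z^{\mathrm{lin}}\}_{\mathbf q_0}$. Hence the degree-$(-2)$ part of the whole coefficient is $-2\{z,\,\divergence\sgrad{\mathbf p_1}{z}/\Delta_z^{\mathrm{lin}}\}_{\mathbf q_0}$, which is precisely the $\diff x\wedge\diff y$-coefficient of $\Theta(\Pi_{\mathrm{lin}})$ produced by~(\ref{curvFormula}) for $(P_1,Q_0)$, the other summand there vanishing because $\divergence\sgrad{\mathbf q_0}{z}=0$. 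As $\Theta(\Pi)\equiv 0$, every homogeneous part of this coefficient is zero, so the $\diff x\wedge\diff y$-coefficient of $\Theta(\Pi_{\mathrm{lin}})$ vanishes; the cyclic permutation of $x,y,z$ disposes of the other two coefficients, giving $\Theta(\Pi_{\mathrm{lin}})=0$. Since $\Pi_{\mathrm{lin}}$ is Kronecker — hence, being nonzero, of rank two in dimension three — the same flatness criterion shows $\Pi_{\mathrm{lin}}$ is flat.

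I expect the main work to be making the homogeneity bookkeeping above rigorous: confirming that the degree-$(-2)$ part of~(\ref{curvFormula}) is genuinely reproduced by $(P_1,Q_0)$ and receives no contribution from the higher Taylor coefficients $P_2,P_3,\dots$ and $Q_1,Q_2,\dots$, and handling the points where some $\Delta^{\mathrm{lin}}$ vanishes (there the corresponding term of $\Theta(\Pi_{\mathrm{lin}})$ is omitted by the convention in the statement of the formula, so there is nothing to prove for that term). The same reasoning could instead be phrased coordinate-free: rescale $\Pi$ along the dilations $\delta_t\colon v\mapsto tv$ so that the rescaled generators of $(\delta_{1/t})_*\Pi$ converge to $(P_1,Q_0)$ as $t\to0$; by naturality of the curvature form under diffeomorphisms the curvature of $(\delta_{1/t})_*\Pi$ equals $\delta_t^*\Theta(\Pi)=0$, and passing to the limit — using continuity of~(\ref{curvFormula}) away from the zero loci of the $\Delta$'s — yields $\Theta(\Pi_{\mathrm{lin}})=0$. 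This variant makes transparent why exactly the linear part of $P$ and the constant part of $Q$ enter the linearization.
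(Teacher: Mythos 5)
Your proposal is correct and follows essentially the same route as the paper, whose entire argument for Proposition \ref{flatLin} is the remark that one decomposes $P$ and $Q$ in Taylor series and applies (\ref{curvFormula}); your homogeneity count (lowest-degree part of the coefficient equals the curvature of $(P_1,Q_0)$, first summand contributing only at degree $\geq -1$) is exactly the bookkeeping that remark leaves implicit. The dilation/rescaling variant you sketch is a clean way to make that bookkeeping rigorous, but it is the same idea, not a different proof.
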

\begin{example}\label{nonFlatLiePencil3}
The linearization  of the pencil from Example \ref{nonFlatLiePencil} at the point $(1,-1,0)$ is generic but not flat. So, the pencil is not flat.
\end{example}
The inverse statement to Proposition \ref{flatLin} is not true.
\begin{example}\label{nonFlatLiePencil2}
The linearization  of the pencil from Example \ref{nonFlatLiePencil} at the point $(0,1,0)$ is flat.
\end{example}

\begin{remark}
	The linearization defined above is a particular case of the so-called $\lambda$-\textit{linearziation} defined in \cite{SBS, JGP} for studying local behavior of bi-Hamiltonian systems. We suggest that a statement analogous to Proposition \ref{flatLin} is true for $\lambda$-linearization of a multidimensional pencil. 
\end{remark}

Now let $x \in \Sigma$ and $\rank \Pi(x) = 0$. Then all brackets of the pencil vanish at $x$. Taking linear parts of any two brackets, we obtain a Lie pencil. If this Lie pencil is generic and the initial pencil is flat, then the Lie pencil is also flat.

\section{Proof of Theorem \ref{curvThm}}\label{proof}
Let $f,g,h$ be local Casimir functions of $\PP, \QQ, \PP+\QQ$ respectively such that $\diff f, \diff g, \diff h$ do not vanish.
By Theorem \ref{BFormula}, the Blaschke curvature form of $W(\PP,\QQ)$ is given by
$$
	\theta	= \left(2\diffX{f}\diffX{g} \log \frac{\diffFXi{h}{g}}{\diffFXi{h}{f}}\right)\diff f \wedge \diff g.
	$$
We need to compute $\pi^*(\theta)$, where $\pi$ is given by $f,g$. Let us compute the $\diff x \wedge \diff y$ term. Two other terms are computed analogously.
Represent $\pi$ as the composition of two maps
$$
(x,y,z) \xrightarrow{\pi_1} (f,g,z) \xrightarrow{\pi_2} (f,g).
$$
We have $$\Theta(\PP, \QQ) = \pi^*(\theta) = \pi_1^*(\pi_2^*(\theta)).$$ Denote
$$
 \Delta_{fg} = \det \diff \pi_1= \det \begin{pmatrix}
\diffFXi{f}{x} & \diffFXi{g}{x} \\
\diffFXi{f}{y} & \diffFXi{g}{y}
\end{pmatrix}.
$$
Compute $\Delta_{fg}$.
 We have
$$
	\PP = \begin{pmatrix}
0 & \{x,y\}_{\mathbf p} & \{x,z\}_{\mathbf p} \\
\{y,x\}_{\mathbf p} & 0 & \{y,z\}_{\mathbf p} \\
\{z,x\}_{\mathbf p} & \{z,y\}_{\mathbf p} & 0
\end{pmatrix}.
$$
Since $f$ is a Casimir function of $\PP$, we have
$$
\diff f = \mu(\{y,z\}_{\mathbf p},\{z,x\}_{\mathbf p}, \{x,y\}_{\mathbf p}),
$$
where $\mu \neq 0$ is a function (an integrating factor). Analogously,
$$
\diff g = \nu(\{y,z\}_{\mathbf q},\{z,x\}_{\mathbf q} ,\{x,y\}_{\mathbf q}).
$$
Consequently,
$$
\Delta_{fg} = \mu \nu \Delta_z.
$$
\par First assume that $\Delta_z = 0$. Since $\pi_2^*(\theta)$ is proportional to $\diff f \wedge \diff g$, the curvature form $\Theta(\PP,\QQ) $ is proportional to $\diff f(x,y,z) \wedge \diff g(x,y,z)$. Further, $$\Delta_{fg} = \frac{\Delta_z}{\mu\nu} = 0,$$ so there is no $\diff x \wedge \diff y$ term in $\diff f(x,y,z) \wedge \diff g(x,y,z)$ and in $\Theta(\PP,\QQ)$, q.e.d.\par
Now assume that $\Delta_z \neq 0$. In this case $\Delta_{fg} \neq 0$, so $f,g,z$ is a well-defined local coordinate system. We have
$$\pi_2^*(\theta) = \left(2\diffX{f}\diffX{g} \log \frac{\diffFXi{h}{g}}{\diffFXi{h}{f}}\right)\diff f \wedge \diff g.$$
To get $\Theta(\PP,\QQ)$ we need to pass back to the coordinates $x,y,z$. The $\diff x \wedge \diff y$ term of $\Theta(\PP,\QQ)$ is

$$
\Theta_{xy} = \left(2\Delta_{fg}\diffX{f}\diffX{g} \log \frac{\diffFXi{h}{g}}{\diffFXi{h}{f}}\right)\diff x \wedge \diff y,
$$
where
\begin{align*}
\diffX{f} = \diffFX{x}{f}\diffX{x} &+ \diffFX{y}{f}\diffX{y} = \frac{1}{\Delta_{fg}}\left(g_y\diffX{x} - g_x\diffX{y}\right) = \\ &= \frac{\nu}{\Delta_{fg}}\left(\{z,x\}_{\mathbf q}\diffX{x} + \{z,y\}_{\mathbf q}\diffX{y}\right) = \frac{\nu}{\Delta_{fg}}\{ z, * \}_{\mathbf q},\\
\diffX{g} = \diffFX{x}{g}\diffX{x} &+ \diffFX{y}{g}\diffX{y} = \frac{1}{\Delta_{fg}}\left(-f_y\diffX{x} + f_x\diffX{y}\right) = \\ &= -\frac{\mu}{\Delta_{fg}}\left(\{z,x\}_{\mathbf p}\diffX{x} + \{z,y\}_{\mathbf p}\diffX{y}\right) = -\frac{\mu}{\Delta_{fg}}\{ z, * \}_{\mathbf p}.
\end{align*}
Since $h$ is a Casimir function of $\PP+\QQ$, we have $\{z,h\}_{\mathbf p} + \{z,h\}_{\mathbf q} = 0$, and
$$
\frac{\diffFXi{h}{g}}{\diffFXi{h}{f}} = -\frac{\mu}{\nu} \cdot \frac{\{z,h\}_{\mathbf p}}{\{z,h\}_{\mathbf q}} = \frac{\mu}{\nu}.
$$
Consequently,
$$
\Theta_{xy} = \left(2\Delta_{fg}\diffX{f}\diffX{g} \log \frac{\mu}{\nu}\right)\diff x \wedge \diff y = (L_1 - L_2)\,\diff x \wedge \diff y,
$$
where
\begin{align*}
L_1 = 2\Delta_{fg}\diffX{f}\diffX{g} \log \mu,\\
L_2 = 2\Delta_{fg}\diffX{f}\diffX{g} \log \nu.
\end{align*}
Let us compute $L_1$. We have
$$
\diffX{g} \log \mu = -\frac{\mu}{\Delta_{fg}}\left(\{z,x\}_{\mathbf p}\frac{\mu_x}{\mu} + \{z,y\}_{\mathbf p}\frac{\mu_y}{\mu}\right) = - \frac{1}{\Delta_{fg}}\left(\{z,x\}_{\mathbf p}\,\mu_x + \{z,y\}_{\mathbf p}\,\mu_y\right).
$$
Recall that $\mu\{z,x\}_{\mathbf p} = f_y$ and $\mu\{y,z\}_{\mathbf p} = f_x$, so
\begin{align}\label{integratingFactorDerivative}
\diffX{x}\left(\mu\{z,x\}_{\mathbf p}\right) =  \diffX{y}\left(\mu\{y,z\}_{\mathbf p}\right),
\end{align}
and
\begin{align*}
\{z,x\}_{\mathbf p}\,\mu_x + \{z,y\}_{\mathbf p}\,\mu_y = \mu\left(\diffX{x}\{z,x\}_{\mathbf p} + \diffX{y}\{z,y\}_{\mathbf p}\right) = \mu \cdot \divergence \sgrad{{\mathbf p}}{z}.
\end{align*}
Consequently,
$$
\diffX{g} \log \mu = -\frac{\mu \cdot \divergence \sgrad{{\mathbf p}}{z}}{\Delta_{fg}} = -\frac{\divergence \sgrad{{\mathbf p}}{z}}{\nu\Delta_z},
$$
and
$$
\diffX{f}\diffX{g} \log \mu = -\diffX{f}\frac{\divergence \sgrad{{\mathbf p}}{z}}{\nu\Delta_z} = M_1 + M_2,
$$
where
\begin{align*}
M_1 &= - \frac{\divergence \sgrad{{\mathbf p}}{z}}{\Delta_z} \cdot \diffX{f}{\frac{1}{\nu}},\\
M_2 &=  - \frac{1}{\nu}\diffX{f}\frac{\divergence \sgrad{{\mathbf p}}{z}}{\Delta_z}.	
\end{align*}
We have
$$
M_1 = \frac{\divergence \sgrad{{\mathbf p}}{z}}{\nu^2\Delta_z}\cdot \diffFX{\nu}{f} = \frac{\divergence \sgrad{{\mathbf p}}{z}}{\nu^2\Delta_z}\cdot\frac{\nu}{\Delta_{fg}}\left(\{z,x\}_{\mathbf q}\,\nu_{x} + \{z,y\}_{{\mathbf q}}\,\nu_{y}\right).
$$
Analogously to (\ref{integratingFactorDerivative}), we get
\begin{align}\label{integratingFactorDerivative2}
\diffX{x}\left(\nu\{z,x\}_{\mathbf q}\right) =  \diffX{y}\left(\nu\{y,z\}_{\mathbf q}\right),
\end{align}
and
$$
\{z,x\}_{{\mathbf q}}\,\nu_x + \{z,y\}_{{\mathbf q}}\,\nu_y = \nu \cdot \divergence \sgrad{{\mathbf q}}{z},
$$
so
$$
M_1 = \frac{\divergence \sgrad{{\mathbf p}}{z}}{\nu^2\Delta_z} \cdot \frac{\nu}{\Delta_{fg}} \cdot \nu \cdot \divergence \sgrad{{\mathbf q}}{z} = \frac{\divergence \sgrad{{\mathbf p}}{z}\cdot \divergence \sgrad{{\mathbf q}}{z}}{\Delta_z \Delta_{fg}}.
$$
Further,
$$
M_2 =  - \frac{1}{\nu}\diffX{f}\frac{\divergence \sgrad{{\mathbf q}}{z}}{\Delta_z} = -\frac{1}{\Delta_{fg}}\left\{z,\frac{\divergence \sgrad{{\mathbf p}}{z}}{\Delta_z}\right\}_{\!\!{\mathbf q}}.
$$
So,
$$
L_1 = 2\Delta_{fg}(M_1 + M_2) = 2\cdot\frac{\divergence \sgrad{{\mathbf p}}{z}\cdot \divergence \sgrad{{\mathbf q}}{z}}{\Delta_z} - 2\left\{z,\frac{\divergence \sgrad{{\mathbf p}}{z}}{\Delta_z}\right\}_{\!\!{\mathbf q}}.
$$
To compute $L_2$, we change the order of partial derivatives $\diffXi{f}$, $\diffXi{g}$. We get
\begin{align*}
L_2 = 2\,\frac{\divergence \sgrad{{\mathbf p}}{z}\cdot \divergence \sgrad{{\mathbf q}}{z}}{\Delta_z} - 2\left\{z,\frac{\divergence \sgrad{{\mathbf q}}{z}}{\Delta_z}\right\}_{\!\!{\mathbf p}}.
\end{align*}
So,
$$
\Theta_{xy} = (L_1 - L_2)\diff x \wedge \diff y = 2\left(\,\left\{z,\frac{\divergence \sgrad{{\mathbf q}}{z}}{\Delta_z}\right\}_{\!\!{\mathbf p}} - \left\{z,\frac{\divergence \sgrad{{\mathbf p}}{z}}{\Delta_z}\right\}_{\!\!{\mathbf q}}\,\right) \diff x \wedge \diff y,
$$
q.e.d.

\begin{remark}
	The most important step of the proof is to introduce the integrating factors $\nu, \mu$ and use the compatibility conditions (\ref{integratingFactorDerivative}), (\ref{integratingFactorDerivative2}). All the rest is a straightforward computation. What is not obvious \textit{a priori}, is that the integrating factors $\nu, \mu$, which cannot be found explicitly,  disappear and do not enter the final formula for $\Theta$.\par
Also, it helps a lot to change the order of partial derivatives $\diffXi{f}$, $\diffXi{g}$ when computing $L_2$. Otherwise, second derivatives of $\nu, \mu$ arise, and the computation becomes very hard.
\end{remark}

\section{Proof of Theorem \ref{conThm}}\label{proof2}
Let $\PP,\QQ$ be two generators of the pencil. To prove the existence part, choose vector fields $X,Y,Z$ such that
\begin{align*}
\PP &= X \wedge Y,\\
\QQ &= X \wedge Z.
\end{align*}
Clearly, such vector fields exist (however, in general, they do not define a coordinate system; if they do, the pencil is flat). Vector fields $X,Y$ are tangent to the symplectic leaves of $\PP$, so $[X,Y]$ must be a linear combination of $X,Y$. Analogously, $[X,Z]$ is a linear combination of $X,Z$, and $[X,Y+Z]$ is a linear combination of $X, Y+Z$. So,
\begin{align}\label{commutators}
\begin{aligned}
\vphantom{X}[X,Y] &= aX + cY,\\
[X,Z] &= bX + cZ.
\end{aligned}
\end{align}
Since $\PP$ and $\QQ$ must be covariantly constant, we have
\begin{align*}
\nabla_{\mathbf{w}} X \wedge Y + X \wedge \nabla_{\mathbf{w}}  Y &= 0,\\
\nabla_{\mathbf{w}}  X \wedge Z + X \wedge \nabla_{\mathbf{w}}  Z &= 0
\end{align*}
for any vector field $W$. These conditions can be written as
\begin{align}\label{compCond}
\begin{aligned}
\nabla&_{\mathbf{w}} X = \alpha(W) X,\\
\nabla_{\mathbf{w}} Y &= \beta(W) X -\alpha(W) Y,\\
\nabla_{\mathbf{w}} Z &= \gamma(W) X - \alpha(W) Z,
\end{aligned}
\end{align}
where $\alpha, \beta, \gamma$ are $1$-forms. Any connection given by (\ref{compCond}) is compatible with the pencil. However, it is not necessarily torsion-free. A connection is torsion-free if and only if
$$
[U,V] = \nabla_{\mathbf{u}} V - \nabla_{\mathbf{v}} U
$$
for any vector fields $U,V$. So, (\ref{compCond}) is torsion-free if and only if
\begin{align}\label{commutators2}
\begin{aligned}
\vphantom{X}[X,Y] &= (\beta(X) - \alpha(Y)) X -\alpha(X) Y,\\
[X,Z] &= (\gamma(X) - \alpha(Z)) X - \alpha(X) Z,\\
[Y,Z] = (&\gamma(Y) - \beta(Z)) X +\alpha(Z)Y - \alpha(Y) Z.
\end{aligned}
\end{align}
Conditions (\ref{commutators}) imply that (\ref{commutators2}) is solvable with respect to $\alpha, \beta, \gamma$, which proves the first part of the theorem.\par
The second part of the theorem is proved by writing down both forms in coordinates. To simplify computations, bring $\PP$ to a constant form.
\section*{Acknowledgements}
This work was partially supported by the Dynasty Foundation and by the Russian Foundation for Basic Research (project no. 12-01-31497).

\bibliographystyle{unsrt}
\bibliography{curvature}

\begin{thebibliography}{10}

\bibitem{Magri}
F.~Magri.
\newblock A simple model of the integrable {H}amiltonian equation.
\newblock {\em J. Math. Phys.}, 19(5):1156--1162, 1978.

\bibitem{GD}
I.M. Gel'fand and I.Ya. Dorfman.
\newblock Hamiltonian operators and algebraic structures related to them.
\newblock {\em Functional Analysis and Its Applications}, 13:248--262, 1979.

\bibitem{Dorfman}
I.~Dorfman.
\newblock {\em Dirac Structures and Integrability of Nonlinear Evolution
  Equations}.
\newblock Wiley, Chichester, 1993.

\bibitem{Blaszak}
M.~B{\l}aszak.
\newblock {\em Multi-Hamiltonian Theory of Dynamical Systems}.
\newblock Springer, Heidelberg, 1998.

\bibitem{GZ}
I.~Gelfand and I.~Zakharevich.
\newblock Webs, {V}eronese curves, and bihamiltonian systems.
\newblock {\em J. Funkt. Anal.}, 99:150–178, 1991.

\bibitem{GZ2}
I.~M. Gelfand and I.~Zakharevich.
\newblock On the local geometry of a bi-{H}amiltonian structure.
\newblock In L.~Corwin, editor, {\em The Gelfand Seminars}, pages 51--112.
  Birkh\"{a}user, Basel, 1990-1992.

\bibitem{Zakharevich}
I.~Zakharevich.
\newblock Kronecker webs, bihamiltonian structures, and the method of argument
  translation.
\newblock {\em Transformation Groups}, 6:267--300, 2001.

\bibitem{GZ3}
I.M. Gelfand and I.~Zakharevich.
\newblock Webs, {L}enard schemes, and the local geometry of bi-{H}amiltonian
  {T}oda and {L}ax structures.
\newblock {\em Selecta Mathematica}, 6:131--183, 2000.

\bibitem{Turiel}
F.-J. Turiel.
\newblock Classification locale d'un couple de formes symplectiques
  {P}oisson-compatibles.
\newblock {\em C.R. Acad. Sci. Paris Ser. I Math.}, 308:575--578, 1989.

\bibitem{Turiel2}
F.-J. Turiel.
\newblock Classification locale simultan\'{e}e de deux formes symplectiques
  compatibles.
\newblock {\em Manuscripta Math}, 82:349--362, 1994.

\bibitem{Turiel3}
F.-J. Turiel.
\newblock Tissus de veronese analytiques de codimension sup\'{e}rieure et
  structures bihamiltoniennes.
\newblock {\em C.R. Acad. Sci. Paris Ser. I Math.}, 331:61--64, 2000.

\bibitem{Turiel4}
F.J. Turiel.
\newblock $\mathrm{C}^\infty$-\'{e}quivalence entre tissus de {V}eronese et
  structures bihamiltoniennes.
\newblock {\em C.R. Acad. Sci. Paris Ser. I Math.}, 328:891--894, 1999.

\bibitem{Panasyuk}
A.~Panasyuk.
\newblock Veronese webs for bihamiltonian structures of higher corank.
\newblock {\em Banach Center Publications}, 51:251–261, 2000.

\bibitem{Dufour}
T.B. Bouetou and J.P. Dufour.
\newblock Veronese curves and webs: interpolation.
\newblock {\em International Journal of Mathematics and Mathematical Sciences},
  2006:1--12, 2006.

\bibitem{Blaschke}
W.~Blaschke.
\newblock {\em Einf\"uhrung in die Geometrie der Waben}.
\newblock Birkh\"auser, 1955.

\bibitem{MF}
A.S. Mishchenko and A.T. Fomenko.
\newblock Euler equations on finite-dimensional {L}ie groups.
\newblock {\em Mathematics of the USSR-Izvestiya}, 12(2):371--389, 1978.

\bibitem{MF2}
A.S. Mishchenko and A.T. Fomenko.
\newblock On the integration of the {E}uler equations on semisimple {L}ie
  algebras.
\newblock {\em Soviet Math. Dokl.}, 17:1591--1593, 1977.

\bibitem{Konyaev}
A.~Konyaev.
\newblock {\em Algebraic and geometric properties of systems obtained by the
  argument translation method}.
\newblock PhD thesis, Moscow State Univerity, 2011.

\bibitem{Manakov}
S.V. Manakov.
\newblock Note on the integration of {E}uler's equations of the dynamics of an
  $n$-dimensional rigid body.
\newblock {\em Functional Analysis and Its Applications}, 10:328--329, 1976.

\bibitem{Bolsinov}
A.V. Bolsinov.
\newblock Compatible {P}oisson brackets on {L}ie algebras and the completeness
  of families of functions in involution.
\newblock {\em Mathematics of the USSR-Izvestiya}, 38(1):69--90, 1992.

\bibitem{SBS}
A.~Bolsinov and A.~Izosimov.
\newblock Singularities of bihamiltonian systems.
\newblock arXiv: 1203.3419, 2012.

\bibitem{JGP}
A.~Izosimov.
\newblock Stability in bihamiltonian systems and multidimensional rigid body.
\newblock {\em Journal of Geometry and Physics}, 62(12):2414 -- 2423, 2012.

\end{thebibliography}

\end{document}